\theoremstyle{plain}
\newtheorem{theorem}{Theorem}[section]
\newtheorem{lemma}[theorem]{Lemma}
\newtheorem{corollary}[theorem]{Corollary}
\newtheorem{prop}[theorem]{Proposition}
\newtheorem{conjecture}[theorem]{Conjecture}
\theoremstyle{definition}
\newtheorem{remark}[theorem]{Remark}
\renewcommand{\deg}{\mathsf{deg}}
\renewcommand{\P}{\mathbb{P}}
\newcommand{\1}{\mathbbm{1}}
\newcommand{\G}{\mathscr{G}}
\newcommand{\C}{\mathscr{C}}
\newcommand{\E}{\mathbb{E}}
\newcommand{\R}{\mathbb{R}}
\renewcommand{\d}{\mathrm{d}}
\newcommand{\Z}{\mathbb{Z}}
\newcommand{\cG}{{{\mathbb{G}}}}
\newcommand{\bfP}{\mathbf{P}}
\newcommand{\V}{\mathscr{V}}
\newcommand{\bfE}{\mathbf{E}}
\newcommand{\PP}{\mathbf{P}}
\newcommand{\ttau}{\tau_\emptyset}
\newcommand{\scrS}{\mathscr{S}}
\newcommand{\N}{\mathbb{N}}
\providecommand{\noopsort}[1]{}
\title{Phase transitions for contact processes on sparse random graphs via metastability and local limits}
\author{
Benedikt Jahnel\thanks{Technische Universit\"at Braunschweig, Universit\"atsplatz 2, 38106 Braunschweig, Germany; Weierstrass Institute for Applied Analysis and Stochastics, Anton-Wilhelm-Amo-Str.\ 39, 10117 Berlin, Germany; \texttt{benedikt.jahnel@tu-braunschweig.de}.}
\and
Lukas L\"{u}chtrath\thanks{Weierstrass Institute for Applied Analysis and Stochastics, Anton-Wilhelm-Amo-Str.\ 39, 10117 Berlin, Germany; \texttt{lukas.luechtrath@wias-berlin.de}.}
\and
Christian M\"{o}nch\thanks{Independent Researcher, 64289 Darmstadt, Germany; \texttt{cmoench25@gmail.com}.}
}
\date{\today}
\begin{document}
\maketitle

\begin{abstract}
We propose a new perspective on the asymptotic regimes of fast and slow extinction in the contact process on locally converging sequences of sparse finite graphs. We characterise the phase boundary by the existence of a metastable density, which makes the study of the phase transition particularly amenable to local-convergence techniques. We use this approach to derive general conditions for the coincidence of the critical threshold with the survival/extinction threshold in the local limit. We further argue that the correct time scale to separate fast extinction from slow extinction in sparse graphs is, in general, the exponential scale, by showing that fast extinction may occur on stretched exponential time scales in sparse scale-free spatial networks. Together with {the results of} Nam, Nguyen and Sly (Trans.\ Am.\ Math.\ Soc.\ 375, 2022), our methods can be applied to deduce that the fast/slow threshold in sparse configuration models coincides with the survival/extinction threshold on the limiting Galton-Watson tree.
\end{abstract}

\begin{center}
\begin{minipage}{0.92\textwidth}
\small
\textbf{AMS 2020 subject classifications:} 60K35 (primary); 05C82, 91D30 (secondary).

\smallskip
\textbf{Keywords:} Local weak convergence; locality of phase transition; random network; SIS model; sparse random graph.
\end{minipage}
\end{center}

\section{Introduction and main results}

\subsection*{The contact process}
 Let $G=(V,E)$ denote a  locally finite connected graph with a dedicated root $o\in V$ and we write $(G,o)$ for the rooted graph. The \emph{contact process} with infection rate $\lambda>0$ on $G$ is the family of set-valued continuous-time Markov processes $\big\{\xi^A=(\xi^A_t)_{t\geq 0}\colon  A\subset V \text{ finite }\big\}$. The law of $\xi^A$ is determined by setting $\xi^A_0=A$ and the transition dynamics, for every $v\in V$,
 \begin{equation}\begin{aligned}
 &\xi^A_t\to\xi^A_t\setminus\{v\} \text{ at rate }1,\\
 &\xi^A_t\to\xi^A_t\cup\{v\} \text{ at rate }\lambda\sum_{w\colon v\sim w}\1\big\{w\in\xi^A_t\big\}.
 \end{aligned}
 \end{equation}
 Here and throughout we write $v\sim w$ for $\{v,w\}\in E$. For ease of notation, we set $\xi^{v}_t=\xi^{\{v\}}_t$ for $v\in V$ and analogously drop the set notation for singletons in similar instances. We write $\PP _{G}^{\lambda}$ for the law of the contact process based on the graph $G$ with infection rate $\lambda$. Three fundamental and well-known properties of the contact process are
 \begin{description}
 	\item[monotonicity:]
 		$\xi^A_t$ is stochastically increasing in $\lambda$ and $G$,
     \item[attractivity:]
     	if $A\subset B$, then $\xi^B$ stochastically dominates $\xi^A$, and
     \item[(self-)duality:]
     	$\PP_{G}^{\lambda}(\xi_t^A \cap B=\emptyset)= \PP_{G}^{\lambda}(\xi_t^B \cap A =\emptyset)$ for $A,B\subset V$ finite.
\end{description}
Duality immediately guarantees that $\xi^A$ is well-defined for any $A\subset V$.

 The \emph{extinction time} $\ttau(A)$ of $\xi^A$ denotes the time at which $(\xi^A_t)_{t\geq 0}$ first gets absorbed into the state $\emptyset$. When we work with a graph sequence, extinction and hitting times on the \(n\)-th graph are denoted by a superscript \((n)\), while unadorned times refer to fixed graphs, local limits, or auxiliary graphs specified explicitly. In this article, one central quantity of interest for the contact process on infinite graphs is the \emph{survival/extinction threshold} of $(G,o)$, given by
\[
\lambda_{\textsf{1}}(G)=\sup\big\{\lambda>0\colon  \PP _G^{\lambda}(\tau_\emptyset(o)=\infty)=0 \big\},
\]
which is guaranteed to be well-defined by monotonicity. Note that the definition of the critical rate is independent of the choice of the root, since we assume $G$ to be connected. For finite graphs, the critical rate is $0$ and contains no information about the dynamics of $\xi$, and we discuss more appropriate thresholds for this setting below. Invoking monotonicity again, one sees that, for any infinite connected graph $G$, the value $\lambda_{\textsf{1}}(G)$ is at most the extinction/survival threshold for the contact process on $\N$ with nearest-neighbour edges. The latter is well-known to be finite \cite{harris_contact_1974} and thus $\lambda_{\textsf{1}}(G)<\infty$. The survival/extinction threshold is also known as the \emph{lower critical value} for the contact process on $G$, the \emph{upper critical value} being the threshold at which the infection returns infinitely often to the root.

The contact process and related interacting particle systems were first systematically studied in this form in the 1970s, see e.g.\ \cite{harris_contact_1974, griffeath_ergodic_1975}. Classical choices for $G$ are homogeneous lattices (usually the hypercubic lattice $\mathbb{Z}^d$) and their finite subgraphs \cite{griffeath_basic_1981,griffeath_binary_1983,durrett_contact_1982,cassandro_metastable_1984,durrett_contact_1988,durrett_contact_1988b}. Using the fact that the contact process is a form of oriented percolation, Bezuidenhout and Grimmett famously proved that the critical contact process on $\Z^d$ dies out \cite{bezuidenhout_critical_1990} and established exponential decay of the volume of the infected set in \cite{bezuidenhout_exponential_1991}. A comprehensive overview of the classical theory is provided in the monographs \cite{liggett_interacting_2005,liggett_stochastic_1999}.

\subsection*{The fast/slow threshold of the contact process on random graphs.}

With the advent of network science in the early 2000's, interest in infection models on finite sparse \emph{random} graphs arose, as they serve as models for the spread of diseases or information in inhomogeneous populations. Since the contact process dies out on any finite graph, the question of extinction vs.\ survival becomes a question of fast extinction vs.\ slow extinction. Here, slow extinction means that the infection survives for a time that scales exponentially in the size of the graph. Let $(G_n)_{n\in\N}$ denote a sequence of finite graphs with $G_n=(V_n,E_n)$. Assume that $|V_n|\to\infty$, then the \emph{fast/slow threshold} for $(G_n)_{n\in\N}$ is given by
\begin{equation}\label{eq:lambda_+}
	\lambda_+\big((G_n)_{n\in\N}\big)=\sup\big\{\lambda>0\colon \lim_{n\to\infty}\mathbf{P}_{G_n}^{\lambda}\big(\tau^{(n)}_\emptyset(V_n)>{\rm e}^{c |V_n|}\big)=0\text{ for all } c>0\big\}.
\end{equation}
The main purpose of this article is to investigate the question when $\lambda_+\big((G_n)_{n\in\N}\big)$ and $\lambda_{\textsf{1}}(G)$ coincide, provided that the finite graphs $(G_n)_{n\in\N}$ approximate $(G,o)$ in a suitable sense. To formalise this, we use the framework of \cite{aldous_processes_2018,vanderhofstad2023giant,vdHGraphs2}. Let $\cG_\ast$ denote the space of equivalence classes of connected locally finite rooted graphs modulo rooted isomorphisms, equipped with the local metric $\mathsf{d}_\ast$ and let $\mathcal{P}(\cG_\ast)$ denote the space of Borel probability measures on $\cG_\ast$.
We do not usually distinguish between rooted graphs and their equivalence classes in the same vein as one commonly speaks of `a random variable $X\in L^1$'. Let us note that any locally finite rooted graph can be viewed as an element of \(\cG_\ast\) if it is identified with the connected component of its root. More background on $\cG_\ast$ as a metric space can be found in \cite{vdHGraphs2} and the references therein. Let $(\G_n)_{n\in\N}$ denote a sequence of finite connected random graphs $\G_n=(\mathscr{V}_n,\mathscr{E}_n)$ with $|\V_n|\overset{n\to\infty}{\longrightarrow}\infty$ in probability. We say $(\G_n)_{n\in\N}$ \emph{converges locally in probability} to some random rooted graph $(\G,o)$ with distribution $\mathsf{Q}\in \mathcal{P}(\cG_\ast)$ if, for any $\varepsilon>0$,
\begin{equation}\label{eq:localconv}
\lim_{n\to\infty}\P\big(\mathsf{d}_\mathcal{P}(Q_n,\mathsf{Q})>\varepsilon\big)=0,
\end{equation}
where
\[
Q_n(\, \cdot \,)=|\V_n|^{-1}\sum_{v\in\V_n}\1\{(\G_n,v)\in\,\cdot\,\}\in\mathcal{P}(\cG_\ast)
\]
denotes the empirical distribution on $\cG_\ast$ associated with $\G_n$ if a root is chosen uniformly at random, and $\mathsf{d}_\mathcal{P}$ denotes the Lévy--Prokhorov metric on $\mathcal{P}(\cG_\ast)$ {(see~\eqref{eq:metric} below for the precise definition)}.
We write
\[
    \G_n\underset{n\to\infty}{\overset{\P}{\rightharpoonup}} (\G,o)
\]
for local convergence in probability. Note that, if $o_n\in\V_n$ is chosen uniformly at random and the distribution of $(\G_n,o_n)$ is denoted by $\mathsf{Q}_n$, then \eqref{eq:localconv} implies that $\mathsf{Q}_n\to\mathsf{Q}$ weakly\footnote{Let us mention that the convergence in~\eqref{eq:localconv} still makes sense if $\mathsf{Q}$ is random, see~\cite[Remarks 2.12 and 2.13]{vdHGraphs2}. In this case, the induced weak limit is the expectation of $\mathsf{Q}$. However, since we are mostly interested in a law-of-large-numbers-type convergence of the contact process observables, we will always assume $\mathsf{Q}$ to be deterministic. Our results also hold for random $\mathsf{Q}$ in an almost-sure sense, but we do not present them in this way to spare the reader another layer of randomness that has no bearing on the technical core of our results.}.  This corresponds to \emph{local weak convergence} of $(\G_n)_{n\in\N}$ to $(\G,o)$ with distribution $\mathsf{Q}$. In particular, the above definitions apply to sequences $(G_n)_{n\in\N} $ of \emph{deterministic} finite graphs. It is well known that every distribution $\mathsf{Q}\in \mathcal{P}(\cG_\ast)$ that arises as a local limit is \emph{unimodular}, i.e., satisfies a certain mass-transport principle~\cite{aldous_processes_2018}. If a unimodular measure $\mathsf{Q}\in \mathcal{P}(\cG_\ast)$ does not admit any non-trivial representation as a convex combination of other unimodular measures, then we call $\mathsf{Q}$ \emph{extremal}. Extremal distributions on rooted graphs are characterised by the property that
\[
\mathsf{Q}(A)\in\{0,1\}, \text{ for all }A\in\mathcal{I},
\]
where the $\sigma$-field $\mathcal{I}$ consists of all Borel-events over $\mathbb{G}_\ast$ that do not depend on the root of the involved graphs.
{The extremality assumption used below plays the same role as an ergodicity assumption. Since all graphs under consideration are connected, the survival threshold $\lambda_{\mathsf 1}(G)$ is invariant under rerooting: for every $a>0$, the event $\{\lambda_{\mathsf 1}(G)\leq a\}$ belongs to $\mathcal I$. Hence, if $\mathsf Q$ is extremal, then $\lambda_{\mathsf 1}(\G)$ is $\mathsf Q$-almost surely constant; we denote this constant by $\lambda_{\mathsf 1}(\mathsf Q)$. In applications, extremality can therefore be checked by verifying triviality of the invariant $\sigma$-field, for instance through ergodicity of the underlying rooted network construction. This covers the standard local limits of ergodic random graph models, such as unimodular Galton--Watson trees and Palm versions of stationary ergodic spatial graphs.}
\begin{remark}\label{rem:giant}
In general, if a sequence $(\G_n)_{n\in \N}$ of (not necessarily connected) random graphs converges locally in probability to a limit graph $(\G,o)$ and  $(\G_n)_{n\in \N}$ is supercritical in the sense that, with high probability, a unique macroscopic \emph{giant component} is formed, then the sequence of the corresponding giant components converges locally in probability to $(\G,o)$ conditioned on $|\V|=\infty$, see \cite{vanderhofstad2023giant}. Examples where the limiting distribution is concentrated on trees include sparse Erd\"os--Renyi graphs, inhomogeneous random graphs, random regular graphs, preferential attachment graphs, and the configuration model. In the context of spatial random graphs, such as e.g.\ lattice bond and site percolation models, spatial scale-free networks, continuum percolation models and spatial random intersection graphs, the limits are usually not trees.
\end{remark}

The classical example for coincidence of the critical values $\lambda_+$ and $\lambda_{\mathsf 1}$ is the case in which $(G_n)_{n\in\N}$ are boxes in $\Z^d$ and the limit graph is the full lattice. In this case, it is also known that fast extinction actually occurs at a time scale that is logarithmic in the size of $V_n$~\cite{durrett_contact_1988,durrett_contact_1988b}. A similar result is known to hold for random regular graphs converging to the $d$-ary tree~\cite{lalley_2017}. The situation is quite different if $(\G,o)$ is not only random but also admits unbounded degrees. In two seminal works, Nam, Nguyen and Sly \cite{nam_critical_2022} and Nguyen and Sly~\cite{NguyenSly25} showed that, in configuration graphs converging to a unimodular Bienaymé--Galton--Watson tree, the extinction time in the subcritical phase is polynomial in the size of the graph and determined explicit bounds on the polynomial power. Consequently, one defines
\[
	\lambda_-\big((\G_n)_{n\in\N}\big)=\sup\big\{\lambda>0\colon \lim_{n\to\infty}\mathbf{P}^{\lambda}_{\G_n}\big(\tau^{(n)}_\emptyset(\V_n)> |\V_n|^{c}\big)=0 {\text{ for some }} c>0 \big\}.
\]
{Thus \(\lambda_-\) records extinction on some polynomial scale. Requiring the preceding display to hold for all exponents \(c>0\) would instead describe sub-polynomial extinction, which is not the convention used in Conjecture~\ref{TAMSconj}.}

The starting point of our investigation is the following conjecture of Nam, Nguyen and Sly:
\begin{conjecture}[{\cite[Conjecture 4]{nam_critical_2022}}]\label{TAMSconj}
Let $\mu$ denote a probability distribution on the non-negative integers satisfying
\[
\sum_{k\geq 0}k(k-2)\mu(k)>0
\]
and let $(\G_n)_{n\in\N}$ denote configuration graphs on $n$ vertices derived from $\mu$. Then,
\[
\lambda_-\big((\C_n)_{n\in\N}\big)=\lambda_+\big((\C_n)_{n\in\N}\big)=\lambda_\mathsf{1}(\mathscr{T}),
\]
where for each $n$, $\C_n$ is the maximal component of $\G_n$, and $\mathscr{T}$ is the local limit of $(\C_n)_{n\in\N}$, i.e., the unimodular Bienaymé--Galton--Watson tree associated with $\mu$ conditioned on non-extinction.
\end{conjecture}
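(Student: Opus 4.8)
The plan is to sandwich the two finite-graph thresholds between $\lambda_{\textsf{1}}(\mathscr{T})$ on both sides, i.e.\ to prove
\[
\lambda_{\textsf{1}}(\mathscr{T})\ \le\ \lambda_-\big((\C_n)_{n\in\N}\big)\ \le\ \lambda_+\big((\C_n)_{n\in\N}\big)\ \le\ \lambda_{\textsf{1}}(\mathscr{T}).
\]
The middle inequality is automatic: if, for a given rate $\lambda$, $\ttau(\V_n)\le|\V_n|^{c}$ holds with high probability for every $c>0$, then a fortiori $\ttau(\V_n)\le{\rm e}^{c|\V_n|}$ with high probability for every $c>0$ (as ${\rm e}^{c|\V_n|}\ge|\V_n|$ for large $n$), and both defining sets of rates are down-closed by monotonicity of the contact process; hence $\lambda_-\le\lambda_+$. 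For the left-hand inequality we simply appeal to the literature: when $\lambda<\lambda_{\textsf{1}}(\mathscr{T})$ the contact process on $\mathscr{T}$ dies out, and the subcritical extinction-time estimates of Nam, Nguyen and Sly~\cite{nam_critical_2022} and Nguyen and Sly~\cite{NguyenSly25} then place $\lambda$ in the defining set of $\lambda_-\big((\C_n)_{n\in\N}\big)$; letting $\lambda\uparrow\lambda_{\textsf{1}}(\mathscr{T})$ gives the bound. Throughout we use that under $\sum_{k}k(k-2)\mu(k)>0$ the giant $\C_n$ exists with high probability and $\C_n\underset{n\to\infty}{\overset{\P}{\rightharpoonup}}\mathscr{T}$ (Remark~\ref{rem:giant}, \cite{vanderhofstad2023giant}), so that the local-convergence machinery applies with limit $\mathscr{T}$.

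The bulk of the work is the slow-extinction bound $\lambda_+\big((\C_n)_{n\in\N}\big)\le\lambda_{\textsf{1}}(\mathscr{T})$, which is precisely where the metastable-density perspective enters. Fix $\lambda>\lambda_{\textsf{1}}(\mathscr{T})$; the goal is $\ttau(\V_n)>{\rm e}^{c|\V_n|}$ with high probability for some $c=c(\lambda)>0$. \emph{First, on the limit:} since $\lambda>\lambda_{\textsf{1}}(\mathscr{T})$ the contact process on $\mathscr{T}$ survives from the root, hence from the all-infected state by attractivity, so the upper invariant law $\bar\nu_\lambda$ (the decreasing $t\to\infty$ limit of the all-infected process) differs from $\delta_\emptyset$; a standard mass-transport argument, spreading a vanishing root-density outward distance by distance, then forces the mean infection density $\rho(\lambda):=\E_{\mathsf{Q}}\big[\bar\nu_\lambda(o\in\xi)\big]$ to be strictly positive --- this $\rho(\lambda)>0$ is the metastable density. \emph{Second, transferring the density:} running the contact process on $\C_n$ from $\V_n$ for a large but fixed time $T$, finite speed of propagation makes ``$v\in\xi_T$'' a local function of the ball around $v$ up to an error exponentially small in its radius, so the local convergence $\C_n\overset{\P}{\rightharpoonup}\mathscr{T}$ together with a second-moment bound (via convergence of the pair statistics) shows that the empirical infection density at time $T$ concentrates around $\E_{\mathsf{Q}}\big[\PP^{\lambda}_{\mathscr{T}}(o\in\xi_T)\big]$, where $\xi$ is the all-infected process on $\mathscr{T}$, and this quantity decreases to $\rho(\lambda)$ as $T\to\infty$; hence with high probability a well-spread set of density $\rho(\lambda)-o(1)$ is infected. \emph{Third, self-sustenance:} from such a configuration the empirical density has a strictly positive drift back towards $\rho(\lambda)$ over each subsequent unit of time, and the event that it drops below $\rho(\lambda)/2$, say, has probability at most ${\rm e}^{-c|\V_n|}$; a union bound over ${\rm e}^{c|\V_n|/2}$ unit-time steps keeps the infection alive until time ${\rm e}^{c|\V_n|/2}$ with high probability. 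Thus $\lambda\ge\lambda_+\big((\C_n)_{n\in\N}\big)$ for every $\lambda>\lambda_{\textsf{1}}(\mathscr{T})$, which is the claim.

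The hard part is the self-sustenance step: turning a positive metastable density on the local limit into a genuinely exponential survival time on the finite graphs without any uniform degree bound --- this is exactly the feature that makes the exponential, rather than stretched-exponential, time scale the correct one for sparse graphs in general. The bounded-degree block constructions available for boxes in $\Z^d$ or for random regular graphs do not transfer verbatim; the drift-and-restart estimate must instead be run intrinsically, controlling the evolution of the empirical density via the mass-transport principle and checking that the hubs of $\C_n$ --- of size at most polylogarithmic in $|\V_n|$, sustaining the infection on their own only for time ${\rm e}^{o(|\V_n|)}$, and moving the density by only $o(|\V_n|)$ when infected --- cannot disrupt the density estimate. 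Setting up the dichotomy ``positive metastable density on the local limit $\iff$ slow extinction along the sequence'' in a form robust to unbounded degrees, and verifying its hypotheses for configuration models, is the technical heart of the argument; by comparison, the identification $\{\rho(\lambda)>0\}=\{\lambda>\lambda_{\textsf{1}}(\mathscr{T})\}$ on $\mathscr{T}$ of the first step is comparatively soft.
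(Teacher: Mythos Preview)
The statement you are attempting to prove is labeled a \emph{Conjecture} in the paper, and for good reason: the paper does not prove it in full. The paper's contribution is restricted to the \emph{second} equality $\lambda_+\big((\C_n)\big)=\lambda_{\mathsf 1}(\mathscr T)$, obtained by combining Theorem~\ref{thm:configuration model1} (which yields $\lambda_+\geq\lambda_{\mathsf 1}$ for any sparse sequence converging locally in probability) with the bound $\lambda_+\leq\lambda_{\mathsf 1}$ taken from \cite[Theorem~5]{nam_critical_2022}. The first equality $\lambda_-=\lambda_+$ is left open.

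Your division of labor is inverted relative to the paper. You treat the inequality $\lambda_+\leq\lambda_{\mathsf 1}$ (supercritical exponential survival) as ``the bulk of the work'' and outsource everything else to the literature. In the paper it is exactly the other way around: $\lambda_+\leq\lambda_{\mathsf 1}$ is quoted from \cite{nam_critical_2022}, and the new content is $\lambda_+\geq\lambda_{\mathsf 1}$, i.e.\ \emph{sub-exponential extinction} for every $\lambda<\lambda_{\mathsf 1}$. The paper's mechanism for this is not a drift-and-restart estimate but the opposite: the vanishing of the metastable density (Theorem~\ref{thm:metastable1}) together with a sparsity argument (Lemma~\ref{lem:sparsity}) showing that, once the infected set has density below $\varepsilon$, its total degree is $o(|\V_n|)$ with high probability, so immediate extinction has probability $\exp(-o(|\V_n|))$; since the density is small for almost all times up to $e^{c|\V_n|}$, one gets exponentially many independent chances to die and thus sub-exponential extinction. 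Your three-step programme (positive density on $\mathscr T$, transfer by local convergence, self-sustenance) is a plausible route towards the direction that the paper simply cites, but the self-sustenance step you yourself flag as ``hard'' is not actually carried out, so this remains a heuristic rather than a proof.

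More seriously, your appeal to \cite{nam_critical_2022,NguyenSly25} for the inequality $\lambda_{\mathsf 1}\leq\lambda_-$ is not justified. Those works establish that, in the subcritical phase on configuration models, the extinction time is \emph{polynomial} in $|\V_n|$ with an explicit positive power. Membership of $\lambda$ in the defining set of $\lambda_-$ requires $\P^\lambda(\tau_\emptyset(\V_n)>|\V_n|^{c})\to 0$ for \emph{every} $c>0$, i.e.\ sub-polynomial extinction; a polynomial bound with a positive exponent does not deliver this (and a matching polynomial lower bound would contradict it outright). This is precisely why the first equality is stated as a conjecture and not a theorem.
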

For the moment, we remark that $\lambda_\mathsf{1}(\mathscr{T})>0$ if and only if $\mu$ has an exponential tail \cite{huang2018contact, bhamidi_survival_2021} and that the partial result $\lambda_+\big((\G_n)_{n\in\N}\big)\leq \lambda_\mathsf{1}(\mathscr{T})$ was already established in \cite{nam_critical_2022}.
\begin{remark}\label{rem:lambda_+}
	{Let us remark that our definition of \(\lambda_+\) describes the fast-extinction side of the exponential threshold. It is slightly more conservative than the long-survival threshold \(\lambda_c^+\) that is actually used in Nam--Nguyen--Sly~\cite{nam_critical_2022}, which is defined through high-probability survival up to \(\exp(\beta |V_n|)\) and then lets \(\beta\downarrow0\). Indeed, if \(\lambda\) belongs to the set in~\eqref{eq:lambda_+}, then survival up to \(\exp(\beta |V_n|)\) has probability tending to zero for every fixed \(\beta>0\), so \(\lambda\) cannot belong to the long-survival regime of~\cite{nam_critical_2022}. Hence, when invoking their result for configuration models, our \(\lambda_+\) is bounded above by their \(\lambda_c^+\), and the inequality \(\lambda_c^+\leq \lambda_1\) from~\cite[Theorem~5]{nam_critical_2022} applies to the threshold used here. Moreover, the conjecture implicitly claims that both definitions coincide, i.e.\ \(\lambda_+=\lambda_c^+\).}
\end{remark}

\subsection*{Overview of main results.}
{Throughout this section, we denote by} $(\G_n)_{n\in\N}$ a sequence of finite connected random graphs that converges locally in probability to {a locally finite limiting graph} $(\G,o)$. {We further assume its distribution \(Q\) to be extremal to guarantee that the threshold \(\lambda_1(Q)\) is constant.} Let $(s(m))_{m\in \N}$ be some diverging sequence. We say that the contact process on $(\G_n)_{n\in\N}$ is \emph{metastable at time scale $s(m)$} if, for all sequences $(t(n))_{n\in \N}$ with $\lim_{n\to\infty}\P (t(n)\leq s(|\V_n|))=1$, there exists some $\eta>0$, which may depend on $(\G_n)_{n\in\N}$, such that
\[
\limsup_{n\to\infty}\mathbf{P}_{\G_n}^\lambda\Big(|\V_n|^{-1}|\xi^{\V_n}_{t(n)}|>\eta\Big)>0.
\]
Our goal is to quantify the infection density $|\xi^{\V_n}|/|\V_n|$, and thereby the occurrence of a metastable phase, through the \emph{survival probability} of the contact process on the limit graph $(\G,o)$,
\[
\eta_\lambda (\mathsf{Q})=\E\big[\mathbf{P}_\G^\lambda(\tau_\emptyset(o)=\infty)\big]=\P^\lambda(\tau_\emptyset(o)=\infty).
\]
Observe that $\eta_\lambda$ depends only on the \emph{distribution} of the limit graph, while metastability as well as $\lambda_+$ and $\lambda_-$ are, in principle, dependent on the realisation of $(\G_n)_{n\in\N}$. However, if $(\G,o)$ is the infinite cluster of some percolation process, then its distribution $\mathsf Q$ is usually extremal, such as in the setting of Conjecture~\ref{TAMSconj}, and then the limiting objects do not depend on the realisation of the graph sequence, cf.~Remark~\ref{rem:giant}.

Our first main result is that the metastable density of the infection cannot exceed the survival probability in the limit.

\begin{theorem}\label{thm:metastable1}
Assume that $(\G_n)_{n\in\N}$ is a sequence of {finite} connected {random} graphs that converges locally in probability to some rooted locally finite random graph $(\G,o)$ with extremal distribution $\mathsf Q$. Then, for any diverging sequence $(t(n))_{n\in\N}$ of times and any $\varepsilon>0$, we have
\[
\lim_{n\to\infty}\P^\lambda\Big( |\V_n|^{-1}|\xi^{\V_n}_{t(n)}|\geq \eta_\lambda (\mathsf Q)+\varepsilon\Big)=0.
\]
In particular, if \(\lambda<\lambda_1(\G)\equiv\lambda_1(\mathsf Q)\), then
\[
|\V_n|^{-1}|\xi^{\V_n}_{t(n)}|\underset{n\to\infty}{\overset{\P^\lambda}{\longrightarrow}} 0.
\]
\end{theorem}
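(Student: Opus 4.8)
\emph{Proof strategy.} Write $X_n(t):=|\V_n|^{-1}|\xi^{\V_n}_t|$, let $f_t(G,o):=\PP^\lambda_{(G,o)}(\tau_\emptyset(o)>t)$ and $f_\infty(G,o):=\lim_{t\to\infty}f_t(G,o)=\PP^\lambda_{(G,o)}(\tau_\emptyset(o)=\infty)$, so that $\eta_\lambda(\mathsf Q)=\int f_\infty\,\d\mathsf Q$. The plan is to reduce the estimate, via self-duality and stochastic monotonicity of the infected mass in time, to a law of large numbers for a bounded functional of the graphical representation that is, up to a controllable truncation, \emph{local}, and then to transfer it through local convergence. First, by attractivity and the Markov property, for $t\ge s$ the configuration $\xi^{\V_n}_t$ has, conditionally on $\G_n$ and $\xi^{\V_n}_{t-s}$, the law of $\xi^{\xi^{\V_n}_{t-s}}_{s}$, and since $\xi^{\V_n}_{t-s}\subseteq\V_n$, attractivity yields $X_n(t)\preceq X_n(s)$ conditionally on $\G_n$. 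Hence $\P^\lambda(X_n(t(n))\ge a)\le\P^\lambda(X_n(s)\ge a)$ for every fixed $s$ and all large $n$, and since $\beta(s):=\int f_s\,\d\mathsf Q\downarrow\eta_\lambda(\mathsf Q)$ by monotone convergence, it suffices to show that, for each fixed $s$, $\P^\lambda(X_n(s)\ge\beta(s)+\varepsilon)\to 0$.

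To this end I would use self-duality to realise $\{v\in\xi^{\V_n}_s\}$ as the event that the time-reversed ancestral process issued from $(v,s)$ reaches level $0$, which is measurable with respect to the Poisson clocks $\omega$ of the graphical representation. For $R\in\N$ let $g_{s,R}(G,o;\omega)\in\{0,1\}$ indicate that the ancestral process \emph{run on the subgraph induced on $B_R(o)$} reaches level $0$: this is a bounded, a.s.\ continuous functional of the $R$-ball $B_R(o)$ decorated with the clocks on $[0,s]$. Monotonicity in the graph gives $0\le\1\{v\in\xi^{\V_n}_s\}-g_{s,R}(\G_n,v;\omega)\le\1\{E_{s,R}(v)\}$, where $E_{s,R}(v)$ is the event that the full ancestral process from $(v,s)$ leaves $B_R(v)$; and by time-reversal of the graphical representation $\PP^\lambda_{\G_n}(E_{s,R}(v))=h_{s,R}(\G_n,v)$ with $h_{s,R}(G,o):=\PP^\lambda_{(G,o)}\big(\bigcup_{u\le s}\xi^o_u\not\subseteq B_R(o)\big)$, a $[0,1]$-valued deterministic functional of $B_{R+1}(o)$ that is nonincreasing in $R$. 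Under the mild hypothesis — in force throughout the paper — that the forward contact process on $\G$ touches only finitely many sites up to any finite time, $\mathsf Q$-a.s., one has $h_{s,R}\downarrow 0$ pointwise, and hence $\int h_{s,R}\,\d\mathsf Q\downarrow 0$ by dominated convergence.

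Now, adjoining the i.i.d.\ graphical clocks to the vertices and edges preserves local convergence in probability, so empirical averages of bounded a.s.-continuous local functionals of the decorated graph converge in probability to their limiting expectation. Applied to $g_{s,R}$ and to $h_{s,R}$, with $\bar g_{s,R}(G,o):=\bfE^\lambda_{(G,o)}[g_{s,R}]\le f_s(G,o)$ (again by self-duality), this gives $|\V_n|^{-1}\sum_v g_{s,R}(\G_n,v;\omega)\overset{\P^\lambda}{\longrightarrow}\int\bar g_{s,R}\,\d\mathsf Q\le\beta(s)$ and $|\V_n|^{-1}\sum_v h_{s,R}(\G_n,v)\overset{\P^\lambda}{\longrightarrow}\int h_{s,R}\,\d\mathsf Q$. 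Since $X_n(s)\le|\V_n|^{-1}\sum_v g_{s,R}(\G_n,v;\omega)+|\V_n|^{-1}\sum_v\1\{E_{s,R}(v)\}$, the first term exceeds $\beta(s)+\varepsilon/2$ with probability $o(1)$, while, conditioning on $\G_n$, Markov's inequality and bounded convergence bound $\limsup_n\P^\lambda\big(|\V_n|^{-1}\sum_v\1\{E_{s,R}(v)\}\ge\varepsilon/2\big)$ by $(2/\varepsilon)\int h_{s,R}\,\d\mathsf Q$, which vanishes as $R\to\infty$. Letting $R\to\infty$ after $n\to\infty$ yields $\P^\lambda(X_n(s)\ge\beta(s)+\varepsilon)\to 0$, proving the first assertion. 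The final statement is then immediate, since $\lambda<\lambda_1(\G)$ — a well-defined constant by extremality of $\mathsf Q$ — forces $f_\infty\equiv 0$ $\mathsf Q$-a.s., i.e.\ $\eta_\lambda(\mathsf Q)=0$.

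The genuine difficulty is the concentration. The first-moment identity $\bfE^\lambda[X_n(s)\mid\G_n]=\int f_s\,\d Q_n$ is immediate from duality, but a bare Markov bound only gives $\limsup_n\P^\lambda(X_n(s)\ge\eta_\lambda+\varepsilon)\le\eta_\lambda/(\eta_\lambda+\varepsilon)<1$, which is useless. What makes the argument go through is that, \emph{at a fixed time horizon}, infection of a given vertex is — up to the $R$-truncation error, controlled precisely because the contact process has an a.s.\ finite range over $[0,s]$ — a bounded \emph{local} functional of the clock-decorated graph, so that \emph{local convergence in probability}, as opposed to mere local weak convergence, delivers the self-averaging. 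This is exactly the ingredient that fails on exponential time scales, which is why the method produces only the upper bound $\le\eta_\lambda(\mathsf Q)$ here. The remaining points — a.s.\ continuity and the stated ball-dependence of $g_{s,R}$ and $h_{s,R}$ on the space of clock-decorated rooted graphs, and the a.s.\ finiteness of the range of the contact process over a fixed time interval on $\G$ — are routine but should be checked with some care.
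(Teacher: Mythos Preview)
Your argument is correct and takes a genuinely different route from the paper's. The paper works directly at the diverging time $t(n)$: it writes $|\xi^{\V_n}_{t(n)}|=\sum_{v}\1\{\xi^{v}_{t(n)}\neq\emptyset\}$ via the graphical representation, and splits according to whether the infection from $v$ \emph{ever} reaches graph distance $R$. The ``reaches distance $R$'' part is a time-free event depending only on $B_R(v)$ and its clocks, whose empirical average converges in probability to $\eta_{\geq R}:=\P^\lambda(\tau_R(o)<\infty)$ by a first-and-second-moment computation (the two-root characterisation of local convergence in probability); the complementary part---survival to time $t(n)$ while confined to $B_R(v)$---is killed by a tightness argument on $R$-balls together with extinction on finite graphs, using that $t(n)\to\infty$. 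One then lets $R\to\infty$ so that $\eta_{\geq R}\downarrow\eta_\lambda(\mathsf Q)$. Your approach instead first exploits that $\xi^{\V_n}$ started from the full vertex set is stochastically decreasing in time to reduce to a \emph{fixed} horizon $s$, and then truncates the dual process in space; the two limits $R\to\infty$ and $s\to\infty$ replace the paper's single $R\to\infty$. What you gain is a cleaner reduction to a textbook ``bounded local functional under local convergence in probability'' statement. What you pay is the non-explosion hypothesis: your error term is governed by $\int h_{s,R}\,\d\mathsf Q$, whose vanishing as $R\to\infty$ fails precisely when $\xi^o$ can touch infinitely many sites by time $s$. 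The paper's time-free localisation via $\{\tau_R(o)<\infty\}$ avoids this (modulo the identity $\bigcap_R\{\tau_R<\infty\}=\{\tau_\emptyset=\infty\}$, which is itself unproblematic once one notes that explosion of the single-site process on a locally finite graph forces survival). Your caveat about non-explosion is therefore honest and necessary for your route, though vacuous for all the concrete graph classes considered in the paper.
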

Note that the density of the infection is a global quantity, therefore the extinction probability in the limit has to coincide with the `annealed' survival probability $\eta_\lambda$.

The absence of metastability for some time scale does not imply that the contact process dies out on that time scale, as is illustrated by our next result. To formulate it, we define a sequence of random graphs $(\G_n)_{n\in\N}$ to be \emph{sparse} if the family $(\deg_{\G_n}(o_n))_{n\in\N}$ is uniformly integrable under $\P^\lambda$. Sparsity is a natural assumption in the context of locality of the fast/slow transition in the contact process as it guarantees that $(\G_n)_{n\in\N}$ is tight in $\cG_\ast$, see~\cite{BLS15}. 
\begin{theorem}\label{thm:slowextinct}
For every $\varepsilon>0$, there exists a sequence of {finite connected sparse random} graphs $(\G_n)_{n\in\N}$ that converges locally in probability to some {rooted locally finite random graph} $(\G,o)$ with extremal distribution $\mathsf Q $ satisfying $\lambda_{\mathsf{1}}(\G)>0$ and such that, for all \(\lambda>0\),
\begin{equation}\label{eq:slowextinct}
    \lim_{n\to\infty}\P^{\lambda}\Big(\tau^{(n)}_\emptyset(\V_n)>\exp\big(|\V_n|/\log^{1+\varepsilon}(|\V_n|)\big)\Big)=1.
\end{equation}
In particular, for these graph sequences we have that $0=\lambda_-\big((\G_n)_{n\in\N}\big)<\lambda_{\mathsf{1}}(\G)$.
\end{theorem}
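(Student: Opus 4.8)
The plan is to exhibit an explicit, deterministic graph sequence in which the infection is kept alive by a sparse but growing collection of hubs chained together. Fix $\varepsilon>0$, let $m_n=\lceil\log\log n\rceil$ and $K_n=\lceil n/\log^{1+\varepsilon/2}n\rceil$, so that $K_nm_n=o(n)$ while $K_n\ge n/\log^{1+\varepsilon/2}n$. Define $\G_n$ on $|\V_n|=n$ vertices by taking a cycle $C_{n-K_n(m_n+1)}$ as a \emph{backbone}, taking $K_n$ vertex-disjoint stars $S_{m_n}$ (a centre with $m_n$ pendant leaves), joining their centres into a path $u_1\sim u_2\sim\dots\sim u_{K_n}$, and attaching $u_1$ to one fixed backbone vertex; the graph is connected. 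Heuristically, the backbone is irrelevant for survival and is present only to overwhelm the $o(n)$ hub vertices, thereby fixing the local limit and ensuring sparsity, while the chain of hubs drives the long lifetime.

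Two of the three required properties are then immediate. For every fixed $t$, all but $O(n\log\log n/\log^{1+\varepsilon/2}n)=o(n)$ vertices lie inside the backbone at graph distance more than $t$ from the attachment point, so their $t$-neighbourhood is a path of length $2t$; hence $(\G_n)_{n\in\N}$ converges locally in probability to $(\Z,0)$ with limiting law $\mathsf Q=\delta_{(\Z,0)}$, which is unimodular and (trivially) extremal, and $\lambda_{\mathsf 1}(\G)=\lambda_{\mathsf 1}(\Z)\in(0,\infty)$ by classical one-dimensional results~\cite{harris_contact_1974}. For sparsity, $\deg_{\G_n}(v)\le 3$ for all $v$ except the $K_n$ star centres, where it equals $m_n+O(1)$; since $K_nm_n/n\to 0$, the degree of a uniformly chosen root is uniformly integrable.

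It then remains to establish \eqref{eq:slowextinct}. Since $\xi^{\V_n}_0=\V_n$, every star in the chain starts fully infected, and by monotonicity in the graph the process on $\G_n$ dominates the contact process on the chain of stars started from all its vertices; the plan is to show that this chain alone survives for the required time. I would combine: (i) \emph{single-star persistence} --- for every $\lambda>0$ there are $c(\lambda)>0$ and $m_0(\lambda)$ so that for $m\ge m_0(\lambda)$ the contact process on $S_m$ started from a lit configuration keeps the centre and a positive fraction of its leaves infected up to time ${\rm e}^{c(\lambda)m}$ with failure probability at most ${\rm e}^{-c(\lambda)m}$; this is a standard star estimate, see e.g.~\cite{huang2018contact,bhamidi_survival_2021,nam_critical_2022}; (ii) \emph{ignition} --- a lit star transmits the infection to an adjacent dark star and brings it into a lit state within time $\mathrm{poly}(m,1/\lambda)\ll{\rm e}^{c(\lambda)m}$ with probability bounded away from $0$, again via the quasi-stationarity analysis on a star; (iii) \emph{renormalisation along the chain} --- declaring a space-time box over $\{1,\dots,K_n\}$ good when the corresponding star stays lit for a full time window and ignites its neighbour, (i) and (ii) yield stochastic domination of the chain process by a supercritical one-dependent oriented percolation on $\{1,\dots,K_n\}$ whose retention parameter tends to $1$, so that for $n\ge n_0(\lambda)$ the infection survives on the chain up to time $\exp(c_0(\lambda)K_n)$ with probability tending to $1$; a union bound over the $K_n$ stars absorbs the failure events in (i)--(ii) because $K_n{\rm e}^{-c(\lambda)m_n}\to 0$. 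Since $m_n\to\infty$, the threshold $m_0(\lambda)$ is eventually met for \emph{every} $\lambda>0$; as $K_n\ge n/\log^{1+\varepsilon/2}n$ we obtain $\exp(c_0(\lambda)K_n)\ge\exp(n/\log^{1+\varepsilon}n)$ for $n$ large, which is \eqref{eq:slowextinct}. The final assertion $0=\lambda_-\big((\G_n)_{n\in\N}\big)<\lambda_{\mathsf 1}(\G)$ is then immediate: a stretched-exponential time exceeds every fixed power of $n$, and $\lambda_{\mathsf 1}(\Z)>0$.

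The main obstacle is the chain estimate behind (iii): that a path of $K_n$ \emph{growing} stars sustains the infection for a time exponential in $K_n$ \emph{uniformly over all $\lambda>0$}. The delicate regime is small $\lambda$ --- the natural scale is $\lambda^2m_n$, and one needs the constants in (i) and (ii) to deteriorate only polynomially in $1/\lambda$ before feeding them into a genuine supercritical comparison. Most of the single-star input for (i)--(ii) is available in the literature on the contact process on heavy-tailed graphs, so the effort is in packaging it as a clean one-dependent percolation comparison along the chain; the competing demands that the hubs be $o(n)$ in total (for sparsity and the $\Z$ local limit) yet numerous enough, $K_n\gg n/\mathrm{polylog}\,n$, to force a stretched-exponential lifetime are precisely what pin down the admissible growth of $m_n$ and $K_n$.
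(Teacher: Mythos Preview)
Your approach is genuinely different from the paper's and is essentially sound. The paper constructs an augmented one-dimensional Boolean model on the torus with radius law $\P(R>x)=(x\log^p x)^{-1}$ for some $1<p<1+\varepsilon$; the maximum-degree vertex then has degree at least $n/(c_\lambda\log^{1+\varepsilon}n)$ with high probability, and a \emph{single} star of that size already survives for time $\exp(n/\log^{1+\varepsilon}n)$ by the standard star-persistence estimate. No chaining or renormalisation is needed, and the whole argument fits in half a page. Your construction instead trades one giant hub for a chain of $K_n\sim n/\log^{1+\varepsilon/2}n$ small hubs of size $\log\log n$ and extracts the stretched-exponential lifetime from an oriented-percolation comparison along the chain. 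This is more machinery, but it buys you a fully deterministic graph sequence with the simplest possible local limit $(\Z,0)$, whereas the paper has to invoke a separate result to certify $\lambda_{\mathsf 1}>0$ for its random limit.

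One quantitative slip you should fix: the claim $K_n e^{-c(\lambda)m_n}\to 0$ is false --- with $m_n=\lceil\log\log n\rceil$ and $K_n\sim n/\log^{1+\varepsilon/2}n$ one has $K_n e^{-c(\lambda)m_n}\sim n/(\log n)^{1+\varepsilon/2+c(\lambda)}\to\infty$. Fortunately this union bound is not what carries the argument. The mechanism you already name does the work: supercritical $1$-dependent oriented percolation on a strip of width $K_n$, started from the full configuration, survives for $\exp(\Theta(K_n))$ time steps with high probability, and this tolerates a positive density of failed blocks in every time layer. So drop the union-bound sentence and appeal directly to the strip-survival result; the obstacle you flag in your last paragraph is then exactly the block estimate needed for that comparison, and the single-star and ignition inputs you cite are indeed sufficient once $m_n$ exceeds the $\lambda$-dependent threshold.
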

This result suggests that, if $((\G_n,o_n))_{n\in\N}$ is sparse, then the correct time scale to distinguish the fast extinction regime from the slow extinction regime is in general the exponential one, i.e., $s(m)=\textup{e}^{cm}, m\in\N$, for some $c>0$. {Indeed, as made precise in the proof of Theorem~\ref{thm:configuration model1} below, sparsity implies that infected sets of small density have small total degree with high probability. Whenever the process visits such a low-density state, there is therefore an exponentially small, but on the exponential time scale relevant, chance that all infected vertices recover before any new infection occurs. Hence, absence of metastability on the exponential time scale implies extinction on that scale.} On the other hand, it is straightforward that, in sparse graphs, there cannot be survival at super-exponential scales. We use the connection between metastability and survival on the exponential time scale to prove a general inequality for the critical values.
\begin{theorem}\label{thm:configuration model1} Suppose $(\G_n)_{n\in\N}$ is a sequence of {finite} connected sparse {random} graphs converging locally in probability to a {rooted locally finite random} graph $(\G,o)$ with extremal distribution $\mathsf Q$. Then,
\[
    \lambda_+\big((\G_n)_{n\in\N}\big)\geq\lambda_{\mathsf{1}}(\G)\equiv \lambda_{\mathsf{1}}(\mathsf Q).
\]
\end{theorem}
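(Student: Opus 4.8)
The plan is to obtain the inequality as a corollary of Theorem~\ref{thm:metastable1} and the sparsity input recorded in Lemma~\ref{lem:sparsity}. It suffices to show that
\[
\lim_{n\to\infty}\P^\lambda\big(\tau_\emptyset(\V_n)>{\rm e}^{c|\V_n|}\big)=0\qquad\text{for every }c>0\text{ and every }\lambda<\lambda_{\mathsf{1}}(\G),
\]
since this places the whole interval $(0,\lambda_{\mathsf{1}}(\G))$ inside the fast-extinction regime and hence forces $\lambda_+\big((\G_n)_{n\in\N}\big)\ge\lambda_{\mathsf{1}}(\G)\equiv\lambda_{\mathsf{1}}(\mathsf Q)$. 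So fix such a $\lambda$ and a $c>0$. Because $\lambda<\lambda_{\mathsf{1}}(\G)$ and $\mathsf Q$ is extremal, Theorem~\ref{thm:metastable1} yields $|\V_n|^{-1}|\xi^{\V_n}_{t(n)}|\overset{\P^\lambda}{\longrightarrow}0$ for \emph{every} diverging sequence of times $(t(n))_{n\in\N}$. Taking $t(n)={\rm e}^{c|\V_n|}$, which diverges since $|\V_n|\to\infty$ in probability and trivially obeys $\P(t(n)\le{\rm e}^{c|\V_n|})=1$, this shows that the contact process at rate $\lambda$ on $(\G_n)_{n\in\N}$ is \emph{not} metastable at the exponential time scale $s(m)={\rm e}^{cm}$: no $\eta>0$ can make the $\limsup$ in the definition of metastability strictly positive.

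The second, and essential, step is to convert this absence of metastability into sub-exponential extinction; this is precisely where sparsity is used, and it is the content I would import from Lemma~\ref{lem:sparsity}. The mechanism is as follows. For each fixed realisation of $\G_n$, the Markov property at time $t$, together with attractivity and $\xi^{\V_n}_t\subseteq\V_n$, gives sub-multiplicativity of the survival probability,
\[
\mathbf P_{\G_n}^\lambda\big(\tau_\emptyset(\V_n)>kt\big)\le\big[\mathbf P_{\G_n}^\lambda\big(\tau_\emptyset(\V_n)>t\big)\big]^{k},\qquad k\in\N .
\]
Hence it is enough to exhibit one diverging sequence $(t(n))_{n\in\N}$ with $t(n)\le{\rm e}^{(c/2)|\V_n|}$ and a constant $\gamma<c/2$ such that $\mathbf P_{\G_n}^\lambda(\tau_\emptyset(\V_n)>t(n))\le 1-{\rm e}^{-\gamma|\V_n|}$ with $\P$-probability tending to one: inserting this into the display with $k=\lceil{\rm e}^{c|\V_n|}/t(n)\rceil\ge{\rm e}^{(c/2)|\V_n|}$ gives $\mathbf P_{\G_n}^\lambda(\tau_\emptyset(\V_n)>{\rm e}^{c|\V_n|})\le\exp\big(-{\rm e}^{(c/2-\gamma)|\V_n|}\big)$ on that event, and therefore $\P^\lambda(\tau_\emptyset(\V_n)>{\rm e}^{c|\V_n|})\to0$. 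Securing such a toehold is exactly where sparsity is indispensable: absence of metastability tells us only that the infection density is $o(1)$ at diverging times — equivalently, by bounded convergence, that $\E\big[\,|\V_n|^{-1}|\xi^{\V_n}_{t(n)}|\,\big]\to0$ — whereas a priori the infection might persist indefinitely on a vanishing fraction of the vertices, for instance on a slowly growing clique, and it is precisely configurations of this kind that uniform integrability of the root degree rules out. Concretely, I would use tightness of $(\G_n)_{n\in\N}$ in $\cG_\ast$ to control, after discarding a negligible set of atypically high-degree vertices, the probability that a small (though possibly macroscopic-in-number) infected set fails to go extinct within a diverging time, upgrading the averaged density bound above to the required control of the quenched survival probability $\mathbf P_{\G_n}^\lambda(\tau_\emptyset(\V_n)>t(n))$.

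Combining the two steps yields $\P^\lambda(\tau_\emptyset(\V_n)>{\rm e}^{c|\V_n|})\to0$ for every $c>0$ and $\lambda<\lambda_{\mathsf{1}}(\G)$, which is what we wanted. I expect the genuine obstacle to be the second step, i.e.\ Lemma~\ref{lem:sparsity}: passing from ``the global density vanishes'', which Theorem~\ref{thm:metastable1} supplies essentially for free, to ``the survival probability vanishes at exponential times'' truly requires the degree control, because on a general finite graph a vanishing-density infection need not die out, and the sub-multiplicativity bootstrap only begins to pay off once one has pushed the survival probability below $1$ on some diverging time scale. The first step, by contrast, is a formal consequence of Theorem~\ref{thm:metastable1}.
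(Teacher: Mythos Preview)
Your overall strategy is correct and close in spirit to the paper's: both routes use Theorem~\ref{thm:metastable1} to see that the density vanishes at diverging times, and both use sparsity via Lemma~\ref{lem:sparsity} to show that once the infected set is small, it has probability at least $\textup{e}^{-\gamma|\V_n|}$ (with $\gamma$ arbitrarily small) of dying out within one further unit of time. Your sub-multiplicativity device $\mathbf P_{\G_n}^\lambda(\tau_\emptyset>kt)\le[\mathbf P_{\G_n}^\lambda(\tau_\emptyset>t)]^k$ is a clean way to iterate this toehold; the paper organises the iteration differently, by defining stopping times $\tau_1,\tau_2,\dots$ at which the density dips below $\varepsilon$, coupling to a geometric experiment with success probability $\ge\textup{e}^{-\gamma|\V_n|}$, and showing via a randomised-time/duality argument that the number $K$ of such dips in $[0,T]$ exceeds $T/3$ with high probability. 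Both schemes are equivalent in strength, and your version arguably needs less, since you only require the density to be small at \emph{one} diverging time rather than at a positive fraction of times in $[0,T]$.

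The gap in your write-up is that you do not actually establish the toehold: the sentence ``Concretely, I would use tightness \dots\ upgrading the averaged density bound above to the required control of the quenched survival probability'' is a statement of intent, not an argument. What is needed is exactly the computation the paper carries out at its stopping times: on the graph event $E_n^{\mathsf c}$ of Lemma~\ref{lem:sparsity}, any configuration $A$ with $|A|\le\varepsilon|\V_n|$ has $\sum_{v\in A}\deg_{\G_n}(v)\le\delta|\V_n|$, and hence
\[
\mathbf P_{\G_n}^\lambda\big(\xi^A_1=\emptyset\big)\ge (1-\textup{e}^{-1})^{\varepsilon|\V_n|}\,\textup{e}^{-2\lambda\delta|\V_n|}\;\ge\;\textup{e}^{-\gamma|\V_n|},
\]
with $\gamma=\gamma(\varepsilon,\delta,\lambda)\to 0$ as $\delta\to 0$ (Lemma~\ref{lem:sparsity} lets you send $\varepsilon\to 0$ with $\delta$). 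Combining this with the fact, immediate from Theorem~\ref{thm:metastable1}, that $\mathbf P_{\G_n}^\lambda(|\xi^{\V_n}_{t(n)}|\le\varepsilon|\V_n|)\to 1$ in $\P$-probability for any diverging $t(n)$, yields the quenched bound $\mathbf P_{\G_n}^\lambda(\tau_\emptyset(\V_n)>t(n)+1)\le 1-\tfrac12\textup{e}^{-\gamma|\V_n|}$ on an event of $\P$-probability tending to $1$. Once you insert this explicitly, your sub-multiplicativity bootstrap goes through as written; without it the proof is incomplete. Note also that Lemma~\ref{lem:sparsity} is not itself the ``genuine obstacle'' you describe---it is a short consequence of uniform integrability---but rather the input that makes the one-step extinction estimate above possible.
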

In particular, our theorem implies the second equality in Conjecture~\ref{TAMSconj}.
\begin{corollary}
Let $\mu$ denote a probability distribution on the non-negative integers {satisfying \(\sum_{k\geq0}k(k-2)\mu(k)>0\)}, and let $(\G_n)_{n\in\N}$ denote configuration graphs on $n$ vertices derived from $\mu$. Then,
\[
\lambda_+\big((\C_n)_{n\in\N}\big)=\lambda_\mathsf{1}(\mathscr{T}),
\]
where, for each $n$, $\C_n$ denotes the largest component in $\G_n$ and $\mathscr{T}$ is the local limit of $(\C_n)_{n\in\N}$.
\end{corollary}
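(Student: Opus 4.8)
The plan is to establish the identity $\lambda_+((\C_n)_{n\in\N})=\lambda_\mathsf{1}(\mathscr{T})$ by sandwiching. The lower bound $\lambda_+((\C_n)_{n\in\N})\geq\lambda_\mathsf{1}(\mathscr{T})$ will be obtained by applying Theorem~\ref{thm:configuration model1} directly to the sequence $(\C_n)_{n\in\N}$, while the matching upper bound $\lambda_+((\C_n)_{n\in\N})\leq\lambda_\mathsf{1}(\mathscr{T})$ is already available from \cite{nam_critical_2022}, see the discussion following Conjecture~\ref{TAMSconj}. Hence the only genuine task is to check that $(\C_n)_{n\in\N}$ satisfies the hypotheses of Theorem~\ref{thm:configuration model1}: it must be a sequence of connected sparse graphs converging locally in probability to $\mathscr{T}$, and the limiting law $\mathsf{Q}$ of $\mathscr{T}$ must be extremal.

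First I would dispose of connectedness, which is immediate since $\C_n$ is by construction a connected component of $\G_n$, and of the local limit. Focusing on the regime $\sum_k k(k-2)\mu(k)>0$ of Conjecture~\ref{TAMSconj}, in which a giant component with $|\V(\C_n)|\sim\rho\,|\V_n|$ for some $\rho\in(0,1)$ emerges and $\mathscr{T}$ is an infinite tree, Remark~\ref{rem:giant} together with the classical giant-component analysis of the configuration model (see \cite{vanderhofstad2023giant}) shows that $(\G_n)_{n\in\N}$ converges locally in probability to the unimodular Bienaym\'e--Galton--Watson tree with offspring law $\mu$, and hence $(\C_n)_{n\in\N}$ converges locally in probability to $\mathscr{T}$, that tree conditioned on non-extinction. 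Extremality of $\mathsf{Q}$ then follows from the standard fact that survival of a Galton--Watson tree is, up to a null set, a tail event of the offspring variables, whence every root-independent event has $\mathsf{Q}$-probability in $\{0,1\}$.

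Next I would verify sparsity, i.e.\ uniform integrability of $\deg_{\C_n}(o_n)$ with $o_n$ uniform in $\V(\C_n)$. The required input is that $\mu$ has finite mean --- which is built into the notion of a configuration graph derived from $\mu$ --- so that $\sum_{k>K}k\,\mu(k)\to 0$ as $K\to\infty$. Since the empirical degree distribution of $\G_n$ converges to $\mu$ and $|\V(\C_n)|\sim\rho\,|\V_n|$, the truncated sums $|\V(\C_n)|^{-1}\sum_{v\in\V(\C_n)}\deg_{\G_n}(v)\1\{\deg_{\G_n}(v)>K\}$ are bounded, up to $o(1)$, by $\rho^{-1}\sum_{k>K}k\,\mu(k)$, hence uniformly small for large $K$; this is exactly uniform integrability of $\deg_{\C_n}(o_n)$. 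With all hypotheses in place, Theorem~\ref{thm:configuration model1} yields the lower bound, and together with \cite{nam_critical_2022} the claimed equality follows.

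The main obstacle is one of bookkeeping rather than analysis: one must make the passage from $\G_n$ to its giant component precise in the local-convergence and uniform-integrability steps, confirm extremality of $\mathsf{Q}$, and reconcile the statement of \cite{nam_critical_2022} --- formulated for $(\G_n)_{n\in\N}$ --- with the present one for $(\C_n)_{n\in\N}$. The last point is harmless, since the non-giant components have sub-linear size whp, so the contact process on them dies out on a time scale sub-exponential in $|\V_n|$, and $|\V(\C_n)|$ differs from $|\V_n|$ only by a constant factor; hence $\lambda_+((\G_n)_{n\in\N})=\lambda_+((\C_n)_{n\in\N})$. The genuinely hard work is all contained in Theorem~\ref{thm:configuration model1} and in \cite{nam_critical_2022}, which we take as given.
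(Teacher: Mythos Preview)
Your proposal is correct and follows essentially the same approach as the paper: apply Theorem~\ref{thm:configuration model1} to $(\C_n)_{n\in\N}$ for the lower bound and invoke \cite[Theorem~5]{nam_critical_2022} for the upper bound. The paper's proof is terser---it simply cites \cite{vanderhofstad2023giant} for the local convergence of the giant components and asserts that Theorem~\ref{thm:configuration model1} applies---whereas you spell out the verification of sparsity and extremality and worry about the $(\G_n)$-versus-$(\C_n)$ formulation of the upper bound; none of this extra bookkeeping is strictly needed, but it is all correct.
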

\begin{proof}
It is well known that the configuration model converges locally in probability to the unimodular Bienaymé--Galton--Watson tree $\mathscr{T}$ and that the corresponding giant components converge to the limit tree conditioned on non-extinction, see \cite{vanderhofstad2023giant}. Hence, Theorem~\ref{thm:configuration model1} applies. The converse inequality is provided in~\cite[Theorem 5]{nam_critical_2022}{, cf.\ Remark~\ref{rem:lambda_+}.}
\end{proof}
Our final result concerns lower bounds in probability for the metastable density.
For this, consider the condition
\begin{equation}\label{eq:almostlocal}
\lim_{R\to\infty}\limsup_{n\to\infty}\P^{\lambda}\Big(\xi^{o_n}_{t(n)}=\emptyset, \tau^{(n)}_R(o_n)<t(n)\Big)=0,
\end{equation}
where  $\tau^{(n)}_R(o_n)$ denotes the first time that a vertex at distance $R$ from the root is infected in $\G_n$. {The event in~\eqref{eq:almostlocal} describes precisely the possible mismatch between survival in the local limit and survival up to the finite observation time \(t(n)\): the infection started at the root has reached distance \(R\), and therefore looks locally like a surviving infection, but it has nevertheless died out before time \(t(n)\). Thus the condition says that, after first choosing a large local window, such long excursions followed by early extinction are negligible.} The next statement asserts that~\eqref{eq:almostlocal} is necessary and sufficient for lower bounding the metastable density via the limit's survival probability.
\begin{prop}\label{prop:impliesconvergence}
Let $(\G_n)_{n\in\N}$ be a sequence of {finite} connected random graphs with $\G_n\underset{n\to\infty}{\overset{\P}{\rightharpoonup}} (\G,o)$, where the limit is distributed according to some extremal distribution $\mathsf Q$. Let $(t(n))_{n\in\N}$ denote a sequence of diverging times. Then, \eqref{eq:almostlocal} is equivalent to
\[
\lim_{n\to\infty}\P^\lambda\Big( |\V_n|^{-1}|\xi^{\V_n}_{t(n)}| \leq \eta_\lambda (\mathsf Q)-\varepsilon\Big)=0,\qquad\text{ for all }\varepsilon>0.
\]
\end{prop}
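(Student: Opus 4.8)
The plan is to route everything through the \emph{annealed} infection density $D_n:=|\V_n|^{-1}|\xi^{\V_n}_{t(n)}|$ and its mean. By self-duality, $\PP^\lambda_{\G_n}(v\in\xi^{\V_n}_{t(n)})=\PP^\lambda_{\G_n}(\xi^v_{t(n)}\neq\emptyset)=\PP^\lambda_{\G_n}(\tau_\emptyset(v)>t(n))$ for every $v\in\V_n$, so averaging over a uniformly chosen root $o_n$ gives
\[
\E^\lambda[D_n]=\P^\lambda\big(\tau_\emptyset(o_n)>t(n)\big)
\]
(I treat $t(n)$ as deterministic; the random case is recovered by conditioning on $(\G_n,t(n))$). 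The argument then has two essentially independent parts: a \emph{localisation step} identifying $\E^\lambda[D_n]\to\eta_\lambda(\mathsf Q)$ with condition~\eqref{eq:almostlocal}, and a soft conversion of mean convergence into two-sided concentration of $D_n$, where Theorem~\ref{thm:metastable1} supplies the upper tail.

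For the localisation step, fix $R\in\N$ and split $\{\tau_\emptyset(o_n)>t(n)\}$ according to whether a vertex at distance $R$ from $o_n$ has been infected before $t(n)$:
\[
\P^\lambda\big(\tau_\emptyset(o_n)>t(n)\big)=\P^\lambda\big(\tau^{(n)}_R(o_n)<t(n)\big)-c_n(R)+b_n(R),
\]
where $c_n(R):=\P^\lambda(\xi^{o_n}_{t(n)}=\emptyset,\ \tau^{(n)}_R(o_n)<t(n))$ is exactly the quantity in~\eqref{eq:almostlocal} and $b_n(R):=\P^\lambda(\tau_\emptyset(o_n)>t(n),\ \tau^{(n)}_R(o_n)\ge t(n))$ is the probability of surviving to $t(n)$ while confined to the ball of radius $R$ around the root. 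I would then use that, for fixed $R$, hitting distance $R$ and surviving-while-confined are \emph{local} events: before any vertex at distance $R$ is infected the process lives inside $\{v:\operatorname{dist}(o_n,v)<R\}$, and the first infection of a distance-$R$ vertex is transmitted from a distance-$(R-1)$ vertex, so $(H,\rho)\mapsto\PP^\lambda_H(\tau_R(\rho)<t)$ and $(H,\rho)\mapsto\PP^\lambda_{H[B_R(\rho)]}(\tau_\emptyset(\rho)>t)$ are bounded, locally constant, hence continuous functionals on $\cG_\ast$. Local weak convergence therefore yields $\P^\lambda(\tau^{(n)}_R(o_n)<t)\to\P^\lambda(\tau_R(o)<t)$ for every fixed $t$, and, using monotonicity of these quantities in $t$ together with $t(n)\to\infty$ via a diagonal argument, $\P^\lambda(\tau^{(n)}_R(o_n)<t(n))\to p_R:=\P^\lambda(\tau_R(o)<\infty)$ together with $b_n(R)\le\E\big[\PP^\lambda_{\G_n[B_R(o_n)]}(\tau_\emptyset(o_n)>t(n))\big]\to0$, the latter because $B_R(o)$ is almost surely finite, so the contact process on it dies out and the survival probability up to the diverging time $t(n)$ vanishes. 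Finally, on a locally finite graph surviving forever is equivalent to reaching every distance, whence $\{\tau_R(o)<\infty\}\downarrow\{\tau_\emptyset(o)=\infty\}$ and $p_R\downarrow\eta_\lambda(\mathsf Q)$. Plugging these into the decomposition gives $\limsup_n\big|p_R-\E^\lambda[D_n]\big|=\limsup_n c_n(R)$ for each $R$; letting $R\to\infty$ and using $p_R\to\eta_\lambda(\mathsf Q)$ shows that $\E^\lambda[D_n]\to\eta_\lambda(\mathsf Q)$ if and only if~\eqref{eq:almostlocal} holds.

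For the conversion, if $\P^\lambda(D_n\le\eta_\lambda(\mathsf Q)-\varepsilon)\to0$ for all $\varepsilon>0$ then, together with $\P^\lambda(D_n\ge\eta_\lambda(\mathsf Q)+\varepsilon)\to0$ from Theorem~\ref{thm:metastable1}, one gets $D_n\to\eta_\lambda(\mathsf Q)$ in probability and hence in mean, since $0\le D_n\le1$. Conversely, assuming $\E^\lambda[D_n]\to\eta_\lambda(\mathsf Q)$, split $\E^\lambda[D_n]$ over $\{D_n<\eta_\lambda(\mathsf Q)-\varepsilon\}$, $\{\eta_\lambda(\mathsf Q)-\varepsilon\le D_n\le\eta_\lambda(\mathsf Q)+\varepsilon'\}$ and $\{D_n>\eta_\lambda(\mathsf Q)+\varepsilon'\}$, bounding $D_n\le1$ on the last set, to obtain
\[
\E^\lambda[D_n]\le(\eta_\lambda(\mathsf Q)+\varepsilon')-(\varepsilon+\varepsilon')\,\P^\lambda\big(D_n<\eta_\lambda(\mathsf Q)-\varepsilon\big)+\P^\lambda\big(D_n>\eta_\lambda(\mathsf Q)+\varepsilon'\big);
\]
discarding the last term via Theorem~\ref{thm:metastable1}, taking $\limsup_n$ and then $\varepsilon'\downarrow0$ forces $\P^\lambda(D_n<\eta_\lambda(\mathsf Q)-\varepsilon)\to0$. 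Chaining the two equivalences proves the claim. I expect the localisation step to be the main obstacle: one must check carefully that reaching the distance-$R$ sphere and surviving while confined are genuinely local events — this is where the elementary geometry of first crossings of a distance sphere enters — and then control the diverging, possibly random, time $t(n)$ by monotonicity combined with a diagonal comparison against the finite-ball extinction time.
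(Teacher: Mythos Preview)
Your proof is correct and reaches the same conclusion as the paper, but organises the argument differently in a way worth recording. The paper works throughout at the level of \emph{in-probability} convergence of empirical sums: for one direction it bounds $|\V_n|^{-1}\sum_v \1\{\xi^v_{t(n)}=\emptyset,\,\tau_R^{(n)}(v)\le t(n)\}$ from above by the difference $|\V_n|^{-1}Z_{\ge R}-D_n$ plus a confined-survival term, and then invokes the weak laws of large numbers in Propositions~\ref{prop:noboundedsurvival} and~\ref{prop:conversurvpath} (the latter needing the second-moment calculation via Lemma~\ref{lem:localconvergence}(c)); for the other direction it controls the lower tail of $D_n$ by Markov's inequality against Proposition~\ref{prop:conversurvpath}. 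You instead factor through the \emph{mean}: duality collapses $\E^\lambda[D_n]$ to the single-root survival probability $\P^\lambda(\tau_\emptyset(o_n)>t(n))$, your localisation step then needs only local \emph{weak} convergence (first moments, no second-moment LLN), and the passage back from $\E^\lambda[D_n]\to\eta$ to the in-probability lower bound is outsourced entirely to Theorem~\ref{thm:metastable1}. Since Theorem~\ref{thm:metastable1} itself rests on Proposition~\ref{prop:conversurvpath}, you do not in the end avoid the second-moment input, but your presentation cleanly decouples the localisation (a pure mean identity) from the concentration (borrowed wholesale from the upper bound). Both proofs implicitly use Theorem~\ref{thm:metastable1} for the direction ``lower bound $\Rightarrow$ \eqref{eq:almostlocal}''; you are simply explicit about it.

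One minor correction: the asserted identity $\limsup_n|p_R-\E^\lambda[D_n]|=\limsup_n c_n(R)$ is not literally true as written, since the $b_n(R)$ term and the $o(1)$ approximation of $p_R$ by $\P^\lambda(\tau_R^{(n)}(o_n)<t(n))$ can make $p_R-\E^\lambda[D_n]$ slightly negative for finite $n$. What your decomposition actually yields is $(p_R-\E^\lambda[D_n])-c_n(R)\to 0$, which, combined with $c_n(R)\ge 0$ and the monotonicity of $c_n(R)$ in $R$, is exactly what you need to run the equivalence; the rest of the argument goes through unchanged.
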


This, together with our Theorem~\ref{thm:metastable1} now implies that~\eqref{eq:almostlocal} is equivalent to convergence in probability of the metastable density to the limit's survival probability.

\begin{corollary}\label{cor:metastable2}
Under the assumptions of Proposition~\ref{prop:impliesconvergence}, the condition \eqref{eq:almostlocal} is equivalent to
\[
|\V_n|^{-1}|\xi^{\V_n}_{t(n)}|\underset{n\to\infty}{\overset{\P^\lambda}{\longrightarrow}} \eta_\lambda (\mathsf Q).
\]
\end{corollary}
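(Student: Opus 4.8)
The plan is to obtain the claimed convergence in probability by combining the matching upper and lower bounds on the metastable density that have already been established, and showing that these two one-sided statements together are equivalent to the convergence. First I would recall that Theorem~\ref{thm:metastable1} gives, unconditionally and for any diverging sequence $(t(n))_{n\in\N}$, the upper bound
\[
\lim_{n\to\infty}\P^\lambda\Big( |\V_n|^{-1}|\xi^{\V_n}_{t(n)}|\geq \eta_\lambda (\mathsf Q)+\varepsilon\Big)=0,\qquad\text{for all }\varepsilon>0.
\]
On the other hand, Proposition~\ref{prop:impliesconvergence} asserts that, under the standing assumptions (local convergence in probability to an extremal limit $\mathsf Q$ and $(t(n))_{n\in\N}$ diverging), condition~\eqref{eq:almostlocal} is equivalent to the lower bound
\[
\lim_{n\to\infty}\P^\lambda\Big( |\V_n|^{-1}|\xi^{\V_n}_{t(n)}|\leq \eta_\lambda (\mathsf Q)-\varepsilon\Big)=0,\qquad\text{for all }\varepsilon>0.
\]
Both statements are about the same random variable $D_n \coloneqq |\V_n|^{-1}|\xi^{\V_n}_{t(n)}|$ evaluated against the deterministic constant $\eta_\lambda(\mathsf Q)$.

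The second step is purely a matter of unwinding definitions. Convergence in probability of $D_n$ to the constant $\eta_\lambda(\mathsf Q)$ means precisely that, for every $\varepsilon>0$,
\[
\P^\lambda\big(|D_n - \eta_\lambda(\mathsf Q)| > \varepsilon\big) \xrightarrow[n\to\infty]{} 0,
\]
and by a union bound this probability is sandwiched between $\max$ and sum of $\P^\lambda(D_n \geq \eta_\lambda(\mathsf Q)+\varepsilon)$ and $\P^\lambda(D_n \leq \eta_\lambda(\mathsf Q)-\varepsilon)$. Hence convergence in probability holds if and only if both one-sided probabilities vanish for every $\varepsilon>0$. (One should be mildly careful that $\eta_\lambda(\mathsf Q)\in[0,1]$ may equal $0$ or $1$, in which case one of the two one-sided events is empty; this only makes the argument easier, and one may simply restrict $\varepsilon$ to the relevant range.)

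Putting the pieces together: the upper-bound half of the characterisation is automatic by Theorem~\ref{thm:metastable1} and requires no hypothesis beyond those already assumed in Proposition~\ref{prop:impliesconvergence}. Therefore the full two-sided statement $D_n \xrightarrow{\P^\lambda} \eta_\lambda(\mathsf Q)$ is equivalent to the lower-bound half alone, which by Proposition~\ref{prop:impliesconvergence} is in turn equivalent to~\eqref{eq:almostlocal}. This chain of equivalences is exactly the assertion of the corollary. I do not anticipate a genuine obstacle here: all the analytic content sits in Theorem~\ref{thm:metastable1} and Proposition~\ref{prop:impliesconvergence}, and the corollary is a short logical assembly of those two results together with the elementary fact that convergence in probability to a constant decomposes into two one-sided tail estimates. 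The only point that warrants a sentence of care is the boundary behaviour of $\eta_\lambda(\mathsf Q)$ mentioned above.
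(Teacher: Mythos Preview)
Your proposal is correct and follows exactly the paper's approach: the paper's proof is the single sentence ``This is a direct consequence of Theorem~\ref{thm:metastable1} and Proposition~\ref{prop:impliesconvergence},'' and you have simply spelled out the elementary logic behind that sentence. The extra care about boundary values of $\eta_\lambda(\mathsf Q)$ is fine but unnecessary, since one-sided events being empty trivially have probability zero.
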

\begin{proof}
This is a direct consequence of Theorem~\ref{thm:metastable1} and Proposition~\ref{prop:impliesconvergence}.
\end{proof}
Naturally, establishing~\eqref{eq:almostlocal} for a given time-scale is in general hard and the main challenge in proving metastability for a concrete graph sequence.

\subsection*{Further discussion and related work.}
Another useful observation pertaining to the locality of metastability on $(\G_n)_{n\in\N}$ is that $\lambda_{\mathsf 1}(\G,o)$ can be characterised by tightness of the extinction times.
\begin{lemma}\label{lem:tightness} Suppose that $(\G_n)_{n\in\N}$ is a sequence of {finite connected random graphs} that converges locally weakly to a {rooted locally finite random graph} $(\G,o)$. For any $\lambda>0$,
 $(\tau^{(n)}_\emptyset(o_n))_{n\in\N}$ is tight if and only if
\[
    \P^\lambda(\tau_{\emptyset}(o)<\infty)=1.
\]
\end{lemma}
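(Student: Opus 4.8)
The plan is to prove that, for each fixed $t>0$,
\[
p_n(t):=\P^\lambda\big(\tau^{(n)}_\emptyset(o_n)>t\big)\ \longrightarrow\ \P^\lambda\big(\tau_\emptyset(o)>t\big)\qquad(n\to\infty).
\]
Granting this, the lemma follows at once: each $\tau^{(n)}_\emptyset(o_n)$ is a.s.\ finite (the $\G_n$ being finite graphs), so tightness of $\{\tau^{(n)}_\emptyset(o_n)\}_n$ is the same as $\lim_{t\to\infty}\limsup_n p_n(t)=0$, whereas $t\mapsto\P^\lambda(\tau_\emptyset(o)>t)$ decreases to $\P^\lambda(\tau_\emptyset(o)=\infty)$, which vanishes precisely when $\P^\lambda(\tau_\emptyset(o)<\infty)=1$. (It is also worth recording, though not needed below, the self-duality identity $p_n(t)=\P^\lambda(o_n\in\xi^{\V_n}_t)$, i.e.\ $p_n(t)$ is the expected metastable density, which ties the statement to Theorem~\ref{thm:metastable1}.)

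The event $\{\tau_\emptyset(o)>t\}=\{\xi^o_t\neq\emptyset\}$ is not a local event, so I would approximate it. For $R\in\N$ let $g^R_t(G,o):=\PP^\lambda_{G[B_R(o)]}(\xi^o_t\neq\emptyset)$ be the time-$t$ survival probability of the contact process run on the \emph{induced ball} $G[B_R(o)]$ of radius $R$ about $o$, and let $h^R_t(G,o):=\PP^\lambda_{G}(\tau_R(o)\le t)$ be the probability that the infection from $o$ reaches graph-distance $R$ before time $t$, with $\tau_R(o)$ the analogue of the $\tau^{(n)}_R(o_n)$ of~\eqref{eq:almostlocal} on $(G,o)$. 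Since the infection cannot reach distance $R$ before it has been entirely contained in $B_{R-1}(o)$ — whose internal edges and recovery marks all lie in $G[B_R(o)]$ — both $g^R_t$ and $h^R_t$ depend only on the isomorphism type of $B_R(o)$, hence are bounded \emph{continuous local} functions on $\cG_\ast$. Coupling through one graphical representation, the dynamics on $G$ and on $G[B_R(o)]$ coincide verbatim up to the first exit of the infection from $B_{R-1}(o)$, which yields pointwise
\[
g^R_t(G,o)\ \le\ \PP^\lambda_G(\xi^o_t\neq\emptyset)\ \le\ g^R_t(G,o)+h^R_t(G,o),
\]
the left inequality by monotonicity of the contact process in the underlying graph. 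Inserting $\G_n$ and invoking local weak convergence for the two local functions gives, for every $R$,
\[
\E\big[g^R_t(\G,o)\big]\ \le\ \liminf_n p_n(t)\ \le\ \limsup_n p_n(t)\ \le\ \E\big[g^R_t(\G,o)\big]+\P^\lambda\big(\tau_R(o)\le t\big).
\]
By monotonicity in the graph and monotone convergence, $\E[g^R_t(\G,o)]\uparrow\P^\lambda(\tau_\emptyset(o)>t)$ as $R\to\infty$, and $\P^\lambda(\tau_R(o)\le t)$ decreases to the probability that the contact process on the limit infects vertices at every distance by time $t$, i.e.\ that it \emph{explodes before $t$}.

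For the direction ``$\P^\lambda(\tau_\emptyset(o)<\infty)=1\ \Rightarrow\ $tight'' this explosion term is harmless: if the infection from $o$ dies at a finite time $T$, it can have infected only finitely many sites. Indeed, if infinitely many sites were infected, their first-infection times $T_v<T$ would accumulate at some $s^\ast\le T$; but each such $v$, to be healthy at $T$, must carry a recovery mark in $(T_v,T]$, and conditionally on the infection structure these events are independent over distinct $v$ with probabilities $1-{\rm e}^{-(T-T_v)}$ whose infinite product vanishes. Hence $\P^\lambda(\tau_R(o)\le t)\to0$ and the sandwich gives $\limsup_n p_n(t)\le\P^\lambda(\tau_\emptyset(o)>t)$ for all $t$; choosing first $t$ and then $R$ large makes the bound $<\varepsilon$, and monotonicity in $t$ upgrades this to $\lim_{t\to\infty}\limsup_n p_n(t)=0$. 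For the converse I would argue contrapositively: if $\P^\lambda(\tau_\emptyset(o)=\infty)=\eta>0$, the lower estimate gives $\liminf_n p_n(t)\ge\sup_R\E[g^R_t(\G,o)]$, and one wants this bounded below uniformly in $t$; since away from the explosion event the survival of the infection at time $t$ is already witnessed on some finite ball, the bound is $\ge\eta$, which contradicts tightness.

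\textbf{Main obstacle.} The crux is precisely the interchange of the limits $n\to\infty$ and $R\to\infty$, equivalently the uniform-in-$n$ finite-speed-of-propagation bound $\lim_R\limsup_n\P^\lambda(\tau^{(n)}_R(o_n)\le t)=0$; by local weak convergence of the local function $h^R_t$ this reduces to the escape probability on the limit, $\lim_R\P^\lambda(\tau_R(o)\le t)=\P^\lambda(\text{the contact process from }o\text{ explodes by time }t)$. In the ``$\Leftarrow$'' direction the hypothesis excludes explosion, as above. In the ``$\Rightarrow$'' direction one must still check that, on the survival event, the escape is up to a negligible set visible at finite radius; this is immediate whenever the limit is non-explosive — e.g.\ a Bienaym\'e--Galton--Watson tree with finite-mean offspring, or a bounded-degree percolation cluster, which covers the situations of interest here — and in general ought to follow from a short domination argument. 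This is the step where I expect the real work to lie.
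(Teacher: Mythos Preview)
Your approach is essentially the paper's: both localise the survival event through $\tau_R$, push the $R$-local quantities through local weak convergence, and then let $R\to\infty$. The paper splits on $\{\tau_R=\infty\}$ rather than your $\{\tau_R\le t\}$, and for the direction ``survival $\Rightarrow$ non-tight'' argues via a subsequence coupling rather than your monotone lower bound, but these are cosmetic differences.

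Two remarks. First, your lower bound for the ``$\Rightarrow$'' direction is actually cleaner than you give it credit for: since every active path from $(o,0)$ to some $(v,t)$ in the graphical representation visits only finitely many vertices, it lies entirely inside some $B_R$, so $\sup_R g^R_t(G,o)=\PP^\lambda_G(\xi^o_t\neq\emptyset)$ pointwise, with no non-explosion hypothesis needed. Hence $\liminf_n p_n(t)\ge\P^\lambda(\tau_\emptyset(o)>t)\ge\eta$ and the contrapositive goes through directly; the ``real work'' you anticipate in this direction is not there.

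Second, the non-explosion argument you offer for ``$\Leftarrow$'' is not correct as written. The claim that the events ``$v$ carries a recovery mark in $(T_v,T]$'' are independent with probability $1-\textup{e}^{-(T-T_v)}$ ``conditionally on the infection structure'' fails: the first-infection times $T_v$, and $T=\tau_\emptyset$ itself, already depend on recovery marks along the infection paths, so conditioning on them does not leave the post-$T_v$ recovery marks independent with their unconditional law. The paper does not address this point either---its inequality~\eqref{eq:absvalbound} is not literally a valid bound, and the corrected version still needs $\P^\lambda(\tau_R<\infty)\to 0$, while its assertion ``$\tau_R\to\infty$ almost surely conditionally on survival'' likewise presupposes non-explosion. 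So you have correctly located the only genuine obstacle, and you are not missing anything the paper supplies; but the argument you wrote does not yet close it.
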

{The extinction time in Lemma~\ref{lem:tightness} is that of the contact process started from a single uniformly chosen vertex, since this is the local quantity dual to the density process. Indeed, by self-duality and the choice of \(o_n\) being uniformly at random,
\begin{equation}\label{eq:tight}
    \E^\lambda[\rho_n(t)]
    =\P^\lambda(o_n\in \xi_t^{\V_n})
    =\P^\lambda(\xi_t^{o_n}\neq\emptyset)
    =\P^\lambda(\tau^{(n)}_\emptyset(o_n)>t),
\end{equation}
where $\rho(t)=\rho_n(t)=|\xi^{\V_n}_{t}|/|\V_n|$ denotes the density process associated with $\xi^{\V_n}$ on $\G_n$. Thus tightness of \((\tau^{(n)}_\emptyset(o_n))_{n\in\N}\) is equivalent to a vanishing expected density, and hence to a vanishing density in probability, along every diverging sequence of times. The extinction time \(\tau^{(n)}_\emptyset(\V_n)\) is instead a global persistence time: it may be large even when the density of infected vertices is small.}
Lemma~\ref{lem:tightness} provides yet another perspective on the fast/slow transition. The law of large numbers in Corollary~\ref{cor:metastable2} suggests the following definition for the critical value
\[\lambda_\rho\big((\G_n)_{n\in\N} \big)=\sup\big\{\lambda>0\colon  \lim_{n\to\infty}\mathbf{P}_{\G_n}^\lambda\big(\rho(\textup{e}^{c|\V_n|})>\varepsilon\big)=0 \text{ for all }\varepsilon,c>0\big\}.\]
Denoting
\[
    \begin{aligned}
        \lambda_{\rho}^{-}
        &
            \big((\G_n)_{n\in\N}\big)
        \\ &
            =\sup\big\{\lambda>0\colon  \lim_{n\to\infty}\mathbf{P}_{\G_n}^\lambda\big(\rho(t(|\V_n|))>\varepsilon\big)=0 \text{ for all }\varepsilon>0 \text{ and } (t(n))_{n\in\N}\text{ with } t(n)\to\infty\big\},
    \end{aligned}
\]
we have that $\lambda^{-}_\rho\leq \lambda_\rho$. However, {by~\eqref{eq:tight} and the above}
\[
    \lim_{n\to\infty}\mathbf{P}_{\G_n}^\lambda\big(\rho(t(|\V_n|))>\varepsilon\big)=0 \quad  \text{ for all }\varepsilon>0, t(n)\to\infty,
\]
if and only if $(\tau^{(n)}_\emptyset(o_n))_{n\in\N}$ is tight. Consequently, $\lambda_\mathsf{1}=\lambda^-_{\rho}\leq \lambda_{\rho}$, in the case where the underlying graphs converge locally in probability.  Note that $ \lambda_{\mathsf 1}\leq \lambda_{\rho}$ also follows from Theorem~\ref{thm:metastable1}.
For the configuration model, the bound $\lambda_\rho\geq \lambda_{\mathsf 1}$ is essentially \cite[Theorem 6]{nam_critical_2022}. Our results show that this inequality always holds if $\G_n\to (\G,o)$ locally in probability. Moreover, our proof of Theorem~\ref{thm:configuration model1} below implies that
\[
\text{if }\G_n\underset{n\to\infty}{\overset{\P}{\rightharpoonup}} (\G,o)\text{ and }(\G_n)_{n\in\N} \text{ is sparse, then }\lambda_\rho=\lambda_+.
\]

Further evidence that the polynomial time scale is the natural one for \emph{fast} extinction in sparse locally tree-like graphs is, for instance, given in~\cite{durrett2024contactprocessesquencheddisorder} and~\cite{NguyenSly25}. Note that, unlike in our Theorem~\ref{thm:slowextinct}, the time scale of extinction in the graphs discussed there is \emph{not} determined solely by the presence of stars. Furthermore, Theorem~\ref{thm:slowextinct} should be contrasted with~\cite[Theorem~1.2]{schapira_extinction_2017}, where it is shown that the \emph{supercritical} extinction time is at least as large as the time scale given in Theorem~\ref{thm:slowextinct} on \emph{any} finite graph provided that $\lambda>\lambda_\mathsf{1}(\Z)$. Our proof also shows that the time scale $\exp(|\V_n|/\log(|\V_n|)^{1+o(1)})$ is not optimal, cf.~Remark~\ref{rem:radiuschoice}, but that $\exp(\Theta(|\V_n|))$ cannot be achieved. It is an interesting question to determine whether there are sparse graphs on which fast extinction occurs on time scale $\exp(c^-(|\V_n|))$ and slow extinction occurs on time scale $\exp(c^+(|\V_n|))$ with $c^-<c^+$, or even with $c^-=c^+$. Another recent work proving exponential extinction times with logarithmic correction for all infection rates is~\cite{barnier:hal-05064371}, see in particular Theorem 1.1.(ii) therein. However, the random graphs considered in that paper are \emph{small worlds} and it is highly likely that their local limits do not display an extinction phase in the parameter regime of~\cite[Theorem~1.1.(ii)]{barnier:hal-05064371}. This contrasts with our example in Theorem~\ref{thm:slowextinct}, which is not a small world graph but displays distances comparable to the lattice~\cite{luechtrath2024chemical, gracar2023finiteness} and does have a phase transition in the local limit.

Let us further mention that, if sparsity is violated, survival can occur on super-exponential time scales as demonstrated in~\cite{can_super-exponential_2018}.

Finally, we would like to point out that our approach to metastability in the contact process on random graphs was inspired by~\cite[Theorem~1.4]{linker_contact_2021}, a metastability result on a concrete random graph sequence, and, most prominently, by the phenomenology developed for the corresponding percolation problem~\cite{vanderhofstad2023giant}.

\subsection*{Overview of the proof section.}
The remainder of the paper is devoted to the derivation of our results. We first recall some facts about local convergence and prove Lemma~\ref{lem:tightness}, which is independent of our main results. Then we construct the example leading to Theorem~\ref{thm:slowextinct}, and finally we prove the LLN-type results Theorem~\ref{thm:metastable1} and Proposition~\ref{prop:impliesconvergence} and apply them to derive Theorem~\ref{thm:configuration model1}.

\section{Proofs}\label{sec:proofs}
\subsection{Preparatory and auxiliary results}
We recall some fundamental properties of the space $\cG_\ast$. Recall that the local metric $\mathsf{d}_\ast$ on $\cG_\ast$ is given by
\[
\mathsf{d}_\ast\big((G,o_G),(H,o_H)\big)=2^{-\sup\{k\colon  B_{G}(o_G,k)=B_{H}(o_H,k)\}},
\]
where $ B_{G}(o_G,k)$ denotes the subgraph in $G$ induced by all vertices of graph distances at most $k$ from $o_G$. Recall that statements like $B_{G}(o_G,k)=B_{H}(o_H,k)$ or $G=H$ for elements of $\cG_\ast$ always implicitly refer to equality of equivalence classes under rooted isomorphisms.

We begin by noting that our definition of local convergence in probability is slightly different to the one given in \cite{vanderhofstad2023giant,vdHGraphs2}. In fact, there are several equivalent formulations.
{Recall that the Levy--Prokhorov metric on \(\mathcal P(\cG_\ast)\) is defined, for \(\mu,\nu\in\mathcal P(\cG_\ast)\), by
\begin{equation}\label{eq:metric}
    \mathsf d_\mathcal P(\mu,\nu)=\inf\big\{\varepsilon>0\colon
    \mu(A)\leq \nu(A^\varepsilon)+\varepsilon\text{ and }
    \nu(A)\leq \mu(A^\varepsilon)+\varepsilon
    \text{ for all Borel }A\subset\cG_\ast\big\},
\end{equation}
where
\[
    A^\varepsilon=\big\{(G,o)\in\cG_\ast\colon
    \inf_{(H,o_H)\in A}\mathsf d_\ast\big((G,o),(H,o_H)\big)<\varepsilon
    \big\}.
\]
It metrises weak convergence of probability measures on \(\cG_\ast\); see, for instance, \cite[Chapter~2]{vdHGraphs2}. The following equivalences are standard characterisations of local convergence in probability.}
\begin{lemma}\label{lem:localconvergence}
Let $(\G_n)_{n\in\N}$ be a sequence of finite random graphs and let $(\G,o)\in\cG_\ast$ be a random graph with distribution $\mathsf{Q}$. The following three assertions are equivalent:
\begin{enumerate}
    \item[(a)] $\G_n\underset{n\to\infty}{\overset{\P}{\rightharpoonup}} (\G,o)$
    \item[(b)] For all bounded continuous functions $h\colon \cG_\ast\to\R$,
\[
\E[h(\G_n,o_n)|\G_n] \underset{n\to\infty}{\overset{\P}{\longrightarrow}} \int h(G,o)\,\d\mathsf{Q}(G,o),
\]
where $o_n\in\V_n$ is chosen uniformly at random and $\underset{n\to\infty}{\overset{\P}{\longrightarrow}}$ denotes convergence in probability in $\R$.\footnote{Recall that local convergence in probability in particular implies that $\E[h(\G_n,o_n)] \to\int h(G,o)\,\d\mathsf{Q}(G,o)$, as \(n\to\infty\), for all bounded continuous functions $h$. This characterises local weak convergence by the Portmanteau theorem. Further, since the limit on the right-hand side is deterministic,  local convergence in probability is equivalent to convergence in distribution of all random variables $\E[h(\G_n,o_n)|\G_n]$ towards $\int h(G,o)\,\d\mathsf{Q}(G,o)$. On the other hand, convergence in probability on $\cG_\ast$ with respect to the local topology induced by $\mathsf{d}_\ast$ is stronger than local convergence in probability.}
\item For all bounded continuous functions $h_1,h_2\colon \cG_\ast\to\R$,
\[
\E[h_1(\G_n,o_n)h_2(\G_n,\tilde o_n)] \underset{n\to\infty}{{\longrightarrow}} \int h_1(G,o)\,\d\mathsf{Q}(G,o)\int h_2(G,o)\,\d\mathsf{Q}(G,o),
\]
where $o_n,\tilde o_n\in \V_n$ represent two independently and uniformly chosen roots.
\end{enumerate}
\end{lemma}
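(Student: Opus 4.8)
The plan is to transfer everything to the random empirical measure $Q_n\in\mathcal{P}(\cG_\ast)$. Conditioning on $\G_n$ and using that $o_n$ is uniform on $\V_n$, and that $o_n,\tilde o_n$ are independent and uniform given $\G_n$, one gets the identities
\[
\E[h(\G_n,o_n)\mid\G_n]=\int h\,\d Q_n,\qquad \E[h_1(\G_n,o_n)h_2(\G_n,\tilde o_n)\mid\G_n]=\Big(\int h_1\,\d Q_n\Big)\Big(\int h_2\,\d Q_n\Big).
\]
Hence (b) is exactly the assertion that $\int h\,\d Q_n\to\int h\,\d\mathsf Q$ in probability for every bounded continuous $h\colon\cG_\ast\to\R$, and taking expectations in the second identity shows that (c) is the assertion $\E\big[(\int h_1\,\d Q_n)(\int h_2\,\d Q_n)\big]\to\int h_1\,\d\mathsf Q\int h_2\,\d\mathsf Q$. (As elsewhere in the paper, $\mathsf Q$ is taken deterministic, which is what makes these right-hand sides and the almost-sure statements below meaningful.)

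For \textbf{(a)$\Leftrightarrow$(b)} I would use that $(\cG_\ast,\mathsf{d}_\ast)$ is Polish, so $\mathsf{d}_{\mathcal P}$ metrises weak convergence on $\mathcal{P}(\cG_\ast)$, together with the fact that, by separability of $\cG_\ast$, there is a countable family $(f_k)_{k\in\N}$ of bounded continuous functions determining weak convergence. Combined with the subsequence criterion for convergence in probability this gives both directions. For (a)$\Rightarrow$(b): from an arbitrary subsequence extract a further one along which $\mathsf{d}_{\mathcal P}(Q_n,\mathsf Q)\to 0$ almost surely; on the associated probability-one event $Q_n\to\mathsf Q$ weakly, hence $\int h\,\d Q_n\to\int h\,\d\mathsf Q$, and since the subsequence was arbitrary, (b) follows. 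For (b)$\Rightarrow$(a): from an arbitrary subsequence extract, by a diagonal argument over the countable family $(f_k)$, a further subsequence along which $\int f_k\,\d Q_n\to\int f_k\,\d\mathsf Q$ almost surely for all $k$ simultaneously (a countable intersection of almost-sure events); on that event $Q_n\to\mathsf Q$ weakly, so $\mathsf{d}_{\mathcal P}(Q_n,\mathsf Q)\to 0$, and the subsequence criterion gives (a).

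For \textbf{(b)$\Rightarrow$(c)}: by (b), $\int h_i\,\d Q_n\to\int h_i\,\d\mathsf Q$ in probability for $i=1,2$, and since $|\int h_i\,\d Q_n|\le\|h_i\|_\infty$, the product converges in probability — and, being uniformly bounded, also in $L^1$ — to $\int h_1\,\d\mathsf Q\int h_2\,\d\mathsf Q$; taking expectations and applying the conditional identity yields (c). For \textbf{(c)$\Rightarrow$(b)}: taking $h_2\equiv 1$ in (c) gives $\E[\int h_1\,\d Q_n]\to\int h_1\,\d\mathsf Q$, and taking $h_1=h_2=h$ gives $\E[(\int h\,\d Q_n)^2]\to(\int h\,\d\mathsf Q)^2$; subtracting, $\operatorname{Var}(\int h\,\d Q_n)\to 0$, so $\int h\,\d Q_n\to\int h\,\d\mathsf Q$ in $L^2$ and a fortiori in probability, which is (b). Together, (a)$\Leftrightarrow$(b), (b)$\Rightarrow$(c) and (c)$\Rightarrow$(b) establish all three equivalences.

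The only genuinely delicate step is (b)$\Rightarrow$(a): upgrading convergence in probability of $\int h\,\d Q_n$ for each fixed test function to convergence in probability of $\mathsf{d}_{\mathcal P}(Q_n,\mathsf Q)$ requires handling all test functions at once, which is exactly what the combination of separability of $\cG_\ast$ (a countable convergence-determining class) and the diagonal-subsequence argument is for. Everything else is routine bookkeeping with the conditional-expectation identities and the elementary fact that a uniformly bounded sequence converging in probability also converges in $L^1$.
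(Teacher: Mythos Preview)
Your proof is correct and self-contained, but it takes a somewhat different route than the paper's. For (a)$\Leftrightarrow$(b), the paper first invokes \cite[Thm~2.15~b)]{vdHGraphs2} to replace (b) by the equivalent pointwise condition on ball-frequencies $|\V_n|^{-1}\sum_{v}\1\{B_{\G_n}(v,k)=(H,o_H)\}\to\mathsf Q(B_\G(o,k)=(H,o_H))$ in probability, and then bounds the L\'evy--Prokhorov distance directly using the countable dense set of finite rooted graphs; you instead argue abstractly via Polishness of $\cG_\ast$, a countable convergence-determining class, and the subsequence criterion. Your approach is more general (it works on any Polish space of rooted structures) and avoids the external citation; the paper's is more concrete and exploits the specific ultrametric structure of $\cG_\ast$. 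For (b)$\Leftrightarrow$(c), the paper simply cites \cite[Lemma~2.8]{Lacker23}, whereas you spell out the standard variance argument ($h_2\equiv 1$ gives first moments, $h_1=h_2=h$ gives second moments, so the variance vanishes); this is precisely the computation underlying the cited lemma, so here you are really just unpacking the reference.
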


\begin{proof}
The equivalence of (a) and (b) is precisely the bounded-continuous-test-function characterisation of local convergence in probability; see~\cite[Theorem~2.15(b)]{vdHGraphs2}. Since the limiting measure \(\mathsf Q\) is deterministic here, convergence in distribution of the conditional expectations appearing there is equivalent to convergence in probability. The equivalence of (b) and (c) is presented, for example, in~\cite[Lemma~2.8]{Lacker23}.
\end{proof}

We next explain how the contact process can be incorporated into the local convergence setup. The notion of \emph{random networks} describing processes on random graphs that are invariant under graph isomorphisms was coined in \cite{aldous_processes_2018}. A \emph{network} is a rooted graph $(G,o)$ together with two maps $\Xi_V\colon V\to \mathcal{S},\,  \Xi_E\colon E\to \mathcal{S}$ on edges and vertices. Here, $\mathcal{S}$ is some metric mark space. We choose $\mathcal{S}=\mathcal{N}(\R\times\{1,-1\})$, the space of all locally finite point measures on $\R$ marked by $-1$ or $1$. We turn the network into a random network by allowing the underlying graph $\G$ to be random, and conditionally on $\G=G=(V,E)$, letting $(\Xi_E(e))_{e\in E}$ be an independent collection of $\operatorname{Poisson}(2\lambda)$-processes where each point carries mark $1$ or $-1$ with probability $1/2$ and by letting $(\Xi_V(v))_{v\in V}$ be an independent collection of $\operatorname{Poisson}(1)$-processes where each point carries mark $1$. A realisation of the contact process on $G$ with initial infections at $A\subset V$ is now obtained by considering infection paths induced by interpreting $(\Xi_E(e))_{e\in E}$ as infection events with orientations along edges and $(\Xi_V(v))_{v\in V}$ as recovery events on the vertices. This is the well-known \emph{graphical representation} of the contact process. It is straightforward to see that the law $\mathbf{P}_G^\lambda$ of the induced oriented percolation model is invariant under rooted isomorphisms of $(G,o)$. If $(G,o)$ is selected by some random mechanism, we are therefore justified in interpreting the annealed law $\P^\lambda$ as a distribution on random networks in the sense of \cite{aldous_processes_2018}, in particular we may write
\[
\P^\lambda =\int \mathbf{P}_{G}^\lambda \;\d \mathsf{Q}(G,o)
\]
to designate the law of the contact process on a random graph with distribution $\mathsf{Q}\in \mathcal{P}(\cG_\ast)$. A similar definition applies to sequences of random graphs and henceforth we will assume, without loss of generality, that the graph sequence, limit graph, and all associated contact processes live on the same probability space and have distribution $\P^\lambda$. This is only for notational convenience, since the framework of local convergence in probability is flexible enough to deal with graph sequences that do not converge on the same probability space, see \cite[Remark 2.12]{vdHGraphs2} for a discussion. We occasionally omit the parameter $\lambda$ from the notation if we refer to distributional properties of the underlying graphs under $\P^\lambda$ only.

We close this section with the proof of Lemma~\ref{lem:tightness}, which is independent of our main results.
\begin{proof}[Proof of Lemma~\ref{lem:tightness}]
{Write \(T_n=\tau^{(n)}_\emptyset(o_n)\) and \(T=\tau_\emptyset(o)\). For \(R\in\N\), let \(\tau_R^{(n)}\) and \(\tau_R\) denote the first times at which the infection started at the root reaches a vertex at graph distance \(R\) from the root in \(\G_n\) and \(\G\), respectively. Then, for fixed \(s\geq0\) and \(R\in\N\),
\begin{equation}\label{eq:absvalbound}
	\begin{aligned}
	\big|\P^\lambda
	&
		(T_n>s)-\P^\lambda(T>s)\big|
	\\ &
		\leq \big|\P^\lambda(T_n>s,\tau_R^{(n)}>s)-\P^\lambda(T>s,\tau_R>s)\big| \;+\;\P^\lambda(\tau_R^{(n)}\leq s)+\P^\lambda(\tau_R\leq s).
	\end{aligned}
\end{equation}
For fixed \(R\) and \(s\), the event \(\{T>s,\tau_R>s\}\), as well as the event \(\{\tau_R\leq s\}\), is determined by the marked \(R\)-neighbourhood of the root in the graphical representation, up to time \(s\). Therefore, by local weak convergence of the marked networks,
\[
    \P^\lambda(T_n>s,\tau_R^{(n)}>s)\to \P^\lambda(T>s,\tau_R>s),
    \qquad\text{and}\qquad
    \P^\lambda(\tau_R^{(n)}\leq s)\to \P^\lambda(\tau_R\leq s).
\]
Taking the limsup in~\eqref{eq:absvalbound} gives
\[
    \limsup_{n\to\infty}\big|\P^\lambda(T_n>s)-\P^\lambda(T>s)\big|
    \leq 2\P^\lambda(\tau_R\leq s).
\]
Letting \(R\to\infty\), the right-hand side vanishes by non-explosion of the graphical representation. Hence \(T_n\) converges in distribution to \(T\), in the sense that \(\P^\lambda(T_n>s)\to\P^\lambda(T>s)\) for every \(s\geq0\).

If the contact process on \(\G\) dies out almost surely, then \(T<\infty\) almost surely. The convergence of the tails just established implies tightness of \((T_n)_{n\in\N}\). Conversely, if the contact process on \(\G\) survives with probability \(p>0\), then \(\P^\lambda(T>s)\geq p\) for every \(s\geq0\). Tightness of \((T_n)_{n\in\N}\) would allow us to choose \(s\) with \(\sup_n\P^\lambda(T_n>s)<p/2\), contradicting \(\P^\lambda(T_n>s)\to\P^\lambda(T>s)\geq p\).}
\end{proof}


\subsection{Slow extinction of subcritical contact processes on scale-free spatial networks}
In this section, we prove Theorem~\ref{thm:slowextinct}. To this end, we need to construct, for any given \(\varepsilon>0\), a sequence of graphs $(\G_n)_{n\in \N}$ that satisfies~\eqref{eq:slowextinct}. We first introduce an auxiliary graph. Place \(n\) independent random variables uniformly on the one-dimensional torus \((-n/2,n/2]\) of volume \(n\), order them from smallest value to largest (with respect to the interval) and denote by \(X_{-n/2+1}<\dots<X_{n/2}\) the ordered sequence; we assume \(n\) to be even for notational convenience. We now assign each vertex \(X_i\) an independent radius \(R_i\), drawn as an i.i.d.\ copy of a positive random variable \(R\), and form the associated Boolean graph by connecting \(X_i\) and \(X_j\) precisely if \(\d_n(X_i,X_j)<R_i+R_j\), where \(\d_n\) denotes the torus metric. We denote the resulting graph as \(\widetilde{\G}_n\). The graph \(\G_n\) now has the discrete torus \(\{-n/2+1,\dots,n/2\}\) as its vertex set and
\[
    \mathscr{E}_n :=\big\{i\sim j\colon X_i\sim X_j \text{ in }\widetilde{\G}_n\big\} \cup \big\{i\sim i+1\colon i=-n/2+1,\dots,n/2-1\big\}\cup\big\{n/2\sim 1-n/2\big\}
\]
as its edge set, writing `$\sim$' to indicate neighbours. {We construct the limiting graph \(\G\) by the same rule, but with the vertex set of the auxiliary graph replaced by the Palm version of a Poisson process on the real line; this is the \emph{augmented Boolean model} analysed in~\cite{jahnel2025phasetransitionscontactprocesses}. We identify the origin \(0\) in \(\Z\) as the root of \(\G\).

The local convergence of \((\G_n)_{n\in\N}\) to \((\G,0)\) follows from the usual Palm convergence of binomial point processes on growing tori to the Poisson process on \(\R\); see, for instance,~\cite{last_lectures_2018}. Indeed, if \(o_n\) is chosen uniformly from \(\{-n/2+1,\dots,n/2\}\), and if the torus is lifted to the interval centred at \(X_{o_n}\), then the cyclically relabelled marked point process
\[
    \sum_i\delta_{(X_{o_n+i}-X_{o_n},R_{o_n+i})}
\]
converges on bounded windows to the Palm version of the marked Poisson process, i.e., to a Poisson process with a point at the origin carrying an independent radius. The added nearest-neighbour cycle becomes the nearest-neighbour graph on \(\Z\). For the finite-first-moment radius laws used below, the limiting augmented Boolean graph is locally finite and every fixed rooted graph ball is almost surely determined by a sufficiently large finite window of this marked process; moreover, ties of the form \(|x-y|=R_x+R_y\) have probability zero. Hence \(B_{\G_n}(o_n,r)\) converges in distribution to \(B_\G(0,r)\) for every \(r\). Applying the same finite-window argument to empirical averages of bounded local test functions, together with the spatial law of large numbers for the binomial process on the torus, gives convergence of the empirical rooted-graph measures in probability.

The distribution of \((\G,0)\) is extremal. To see this, enumerate the Palm Poisson points increasingly as \((X_i)_{i\in\Z}\), with \(X_0=0\), and write \(S_i=X_{i+1}-X_i\). The two-sided marked sequence \((S_i,R_i)_{i\in\Z}\) is stationary and ergodic under index shifts, and the rooted graph \((\G,0)\) is a factor of this sequence. Rerooting \(\G\) at another vertex corresponds to such an index shift. Hence every rerooting-invariant event is contained, up to null sets, in the shift-invariant \(\sigma\)-field of the underlying marked sequence, which is trivial. This is precisely extremality.

The same finite-first-moment condition gives sparsity of the finite graph sequence. Let \(D_n^{\rm B}\) be the degree contribution of the Boolean edges at a uniformly chosen root. Conditionally on two radii, a second vertex is connected to the root with probability at most \(\min\{2(R+R')/n,1\}\), and therefore \(\sup_n\E[D_n^{\rm B}]\leq 4\E[R]\). To obtain uniform integrability, truncate the radii at a level \(L\). Edges whose two incident radii are at most \(L\) are contained in the set of points within torus distance \(2L\) of the root; this number has uniformly bounded second moments for fixed \(L\). The expected number of incident Boolean edges for which at least one of the two radii exceeds \(L\) is bounded, uniformly in \(n\), by a constant times \(\E[R\1\{R>L\}]+\P(R>L)\). Sending \(L\to\infty\) shows that \((D_n^{\rm B})_{n\in\N}\) is uniformly integrable. Since the added cycle contributes at most two further neighbours, \((\G_n)_{n\in\N}\) is sparse whenever \(\E[R]<\infty\).

Finally, positivity of the limiting critical value is exactly the augmented-Gilbert-graph case of~\cite[Theorem~2.5]{jahnel2025phasetransitionscontactprocesses}. The assumptions of that theorem are the stationarity and ergodicity verified above, the addition of the nearest-neighbour graph on \(\Z\), and integrability of the radius law, \(\E[R]<\infty\). Hence \(\lambda_1(\G)>0\).

The proof of Theorem~\ref{thm:slowextinct} then exploits a finite-graph effect that is invisible in the local limit. We choose below an integrable radius law with tail just heavy enough that the maximum radius is, with high probability, at least \(K n/\log^{1+\varepsilon}(n)\) for every fixed \(K\). The vertex carrying this radius has, by binomial concentration, degree of the same order. The star induced by this vertex and its neighbours survives for time exponential in its degree, so after choosing \(K>1/c_\lambda\), where \(c_\lambda\) is the star-survival constant in~\eqref{eq:starTailBound}, this single star forces survival of the contact process on \(\G_n\) beyond \(\exp(n/\log^{1+\varepsilon}(n))\) with high probability.}

\begin{proof}[Proof of Theorem~\ref{thm:slowextinct}]
The proof is based on a comparison of the survival time on the graph \(\G_n\) with the time the process survives on the star graph \(\scrS_n^*\), induced by the vertex of maximal degree. To this end, let us first recapitulate some known results on the survival time on star graphs. Let \(\scrS^{(k)}\) be the star graph with \(k\) leaves and centre \(o\). The first result about the survival time on \(\scrS^{(k)}\) is \cite[Lemma~5.2]{berger_spread_2005}, stating that the infection survives, with high probability, exponentially long in the number of leaves. We use a refined version of this statement. First, by~\cite[Lemma~2.5]{schapira_extinction_2017}, we have
\[
\bfE_{\scrS^{(k)}}^\lambda\big[\tau_\emptyset(\scrS^{(k)})\big]\geq {\rm e}^{2 c_\lambda k},
\]
where \(c_\lambda>0\) is a \(\lambda\)-dependent constant. Secondly, we have, due to \cite[Lemma~2.13]{valesin_survey}, that
\[
	\bfP_{\scrS^{(k)}}^\lambda \big(\tau_\emptyset(\scrS^{(k)})\leq t\big)\leq \frac{t}{\bfE^\lambda_{\scrS^{(k)}}\big[\tau_\emptyset(\scrS^{(k)})\big]}.
\]
Combining both results and choosing \(t={\rm e}^{c_\lambda k}\), we thus infer
\begin{equation}\label{eq:starTailBound}
	\bfP_{\scrS^{(k)}}^\lambda\big(\tau_\emptyset(\scrS^{(k)})>{\rm e}^{c_\lambda k}\big)\geq 1-{\rm e}^{-c_\lambda k}.
\end{equation}
Now, fix \(\varepsilon>0\), choose \(1<p<1+\varepsilon\), and consider the augmented Boolean graph \(\G_n\), constructed above, with radius distribution
  \begin{equation}\label{eq:radiuschoice}
  		\P(R>x)=\big(x\log^p(x)\big)^{-1}.
  \end{equation}
Note that \(R\) has finite first moment as \(p>1\) {so that the local limit \((\G,0)\) is sparse and satisfies \(\lambda_1(\G)>0\)}. By independence of the radii, we infer for the largest radius \(R^*_n:=\max\{R_{1-n/2},\dots,R_{n/2}\}\), any \(K>0\) and large enough \(n\) that
\begin{equation*}
	\P\big(R^*_n\leq \tfrac{K n}{\log^{1+\varepsilon}(n)}\big) = \big(1-\tfrac{\log^{1+\varepsilon}(n)}{K n\log^p(K n/\log^{1+\varepsilon}(n))}\big)^n \leq \mathrm{e}^{-\log^{1+\varepsilon-p}(n)/K},
\end{equation*}
which tends to zero, as \(p<1+\varepsilon\). Put differently, \(R^*_n> Kn/\log^{1+\varepsilon}(n)\), for any \(K\), with high probability. Moreover, by construction, the degree of \(X^*_n\), the vertex associated with \(R^*_n\), has degree lower bounded by a Binomial number with parameters \(n\) and \(2R^*_n/n\). Writing \(X_{n,p}\) for a binomial with parameters \(n\) and \(p\), Chernoff's inequality thus yields, for sufficiently large \(n\),
\[
    \begin{aligned}
        \P\big(\deg(X^*_n)> \tfrac{K n}{\log^{1+\varepsilon}(n)}\big)
        &
           = \E\big[\P\big(\deg(X^*_n)> \tfrac{K n}{\log^{1+\varepsilon}(n)} \, \big| \, R_n^*\big)\big]
         \\ &
         	\geq \E\big[\P\big(X_{n, 2K/\log^{1+\varepsilon}(n)}> \tfrac{K n}{\log^{1+\varepsilon}(n)}\big)\1\big\{R^*_n> \tfrac{K n}{\log^{1+\varepsilon}(n)}\big\}\big]
        \\ &
            \geq \big(1-\mathrm{e}^{-Kn/(4\log^{1+\varepsilon}(n))}\big) \P\big(R^*_n>\tfrac{K n}{\log^{1+\varepsilon}(n)}\big)
         \\ &
            \geq \Big(1-\mathrm{e}^{-Kn/(4\log^{1+\varepsilon}(n))}\Big)\big(1-\mathrm{e}^{-\log^{1+\varepsilon-p}(n)/K}\big).
    \end{aligned}
\]
Combining this with~\eqref{eq:starTailBound}, and writing \(\scrS^*_n\) for the star graph induced by \(X^*_n\) and its neighbours, we have for \(K=1/c_\lambda\),
\[
	\begin{aligned}
		\P^\lambda \big(\tau^{(n)}_\emptyset(\V_n)>{\rm e}^{n/\log^{1+\varepsilon}(n)}\big)
		&
			\geq \P^\lambda \big(\tau_\emptyset(\scrS^*_n)>{\rm e}^{n/\log^{1+\varepsilon}(n)}\big)
		\\ &
			\geq \E\Big[\1\big\{\deg(X_n^*)\geq \tfrac{n}{c_\lambda\log^{1+\varepsilon}(n)}\big\}\bfP^\lambda_{\scrS^*_n}\big(\tau_\emptyset(\scrS_n^*)\geq \mathrm{e}^{n/\log^{1+\varepsilon}(n)}\big)\Big]
		\\ &
			\geq \big(1-\mathrm{e}^{-\tfrac{n}{4c_\lambda\log^{1+\varepsilon}(n)}}\big)\big(1-\mathrm{e}^{-c_\lambda\log^{1+\varepsilon-p}(n)}\big)\big(1-\mathrm{e}^{-n/\log^{1+\varepsilon}(n)}\big)\underset{n\to\infty}\longrightarrow 1,
	\end{aligned}
\]
as desired. In particular, this holds true for any \(\lambda\in(0,\lambda_1(\G))\) and hence \(\lambda_1(\G)>\lambda_-\big((\G_n)_{n\in\N}\big)=0\).
\end{proof}
\begin{remark}\label{rem:radiuschoice}
The choice of the distribution in~\eqref{eq:radiuschoice} can be adapted to yield examples in which the survival time is even closer to $\textup{e}^{\Theta(|\V_n|)}$, for instance, by setting
\[
    \P(R>x)\sim\big(x\log (x) \log^2(\log(x))\big)^{-1}.
\]
In particular, the resulting graph would satisfy the lower bound in Theorem~\ref{thm:slowextinct} for all $\varepsilon$ simultaneously. We have chosen the particular scaling in \eqref{eq:radiuschoice} to illustrate that we can match the best known \emph{universal} lower bound on supercritical extinction times given in \cite{schapira_extinction_2017} with subcritical extinction times on certain classes of graphs.
\end{remark}

\subsection{Upper bounds on the metastable density from local limit}
Before we provide the proof of Theorem~\ref{thm:metastable1}, we introduce some notation and auxiliary results. Throughout this section, let $\big((\G_n,o_n)\big)_{n\in\N}$ and its local limit $(\G,o)$ with distribution $\mathsf Q$ be given. As usual, \(o_n\) denotes a uniformly chosen vertex here. Fix $\lambda>0$, set $\eta=\eta_\lambda(\mathsf Q)$ and let $\tau_R^{(n)}(v)$ denote the first time that the infection started in $v\in \V_n$ reaches a vertex of graph distance $R$ from $v$ in $\G_n$. For \(v\in \V\), let \(\tau_R(v)\) denote the corresponding quantity for the rooted limit graph $(\G,o)$. Now define
\[
{\eta_R}=\E[\mathbf{P}_{\G}^\lambda(\tau_R(o)<\infty)],\quad R\in\N,
\]
and
\[
{Z_R}=Z_R(n)=\sum_{v\in \V_n}\1\big\{\tau_R^{(n)}(v)<\infty \big\}, \quad n\in\N.
\]
{The proof of Theorem~\ref{thm:metastable1} compares survival up to the observation time \(t(n)\) with the local event that a single infection reaches distance \(R\) from its starting point. The first auxiliary statement says that an infection which survives until a diverging time but has not reached distance \(R\) is negligible: for fixed \(R\), the event is governed by the process confined to the rooted \(R\)-neighbourhood, and local convergence reduces the estimate to finitely many finite neighbourhoods with high probability. The second auxiliary statement says that the empirical fraction of vertices from which distance \(R\) is ever reached converges to the corresponding probability in the local limit. Extremality is used here to make this limiting fraction deterministic. We state these two estimates next.}

\begin{prop}\label{prop:noboundedsurvival}
Assume that $(\G_n)_{n\in\N}$ is a sequence of {finite connected random graphs} with $\G_n\underset{n\to\infty}{\overset{\P}{\rightharpoonup}} (\G,o)$. Then,
\[
|\V_n|^{-1}\sum_{v\in \V_n}\1\big\{\xi^{v}_{t(n)}\neq \emptyset, \tau_R^{(n)}(v)>t(n)\big\}\underset{n\to\infty}{\overset{\P^\lambda}{\longrightarrow}}0,
\]
for any $R\in\N$ and any diverging sequence $(t(n))_{n\in\N}$.
\end{prop}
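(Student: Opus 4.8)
The plan is to pass from the average over all vertices to the probability for a single, uniformly chosen root, and then to exploit that on the event in question the infection is confined to a \emph{finite} ball whose size is kept under control by tightness of the local neighbourhoods.

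First I would reduce to a one-root statement. Writing $A_v=A_v^{(n)}=\{\xi^v_{t(n)}\neq\emptyset,\ \tau^{(n)}_R(v)>t(n)\}$, conditionally on $\G_n$ together with its graphical representation the random variable $|\V_n|^{-1}\sum_{v\in\V_n}\1\{A_v\}$ equals the conditional probability of $A_{o_n}$ for an independent uniform root $o_n$, so that $\E^\lambda\big[|\V_n|^{-1}\sum_{v\in\V_n}\1\{A_v\}\big]=\P^\lambda(A_{o_n})$. Since the summand lies in $[0,1]$, Markov's inequality reduces the claim to showing $\P^\lambda\big(\xi^{o_n}_{t(n)}\neq\emptyset,\ \tau^{(n)}_R(o_n)>t(n)\big)\to0$ as $n\to\infty$.

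Next I would use confinement. On $\{\tau^{(n)}_R(o_n)>t(n)\}$ the infection started at $o_n$ never reaches a vertex at distance $R$ before time $t(n)$, and, since infection marks travel only along edges, up to that time the graphical representation couples $\xi^{o_n}$ with the contact process on the finite induced subgraph $H_n:=B_{\G_n}(o_n,R-1)$ started from $\{o_n\}$; hence $\{\xi^{o_n}_{t(n)}\neq\emptyset\}\cap\{\tau^{(n)}_R(o_n)>t(n)\}$ forces the contact process on $H_n$ to be non-empty at time $t(n)$, and it remains to bound $\P^\lambda$ of this last event. For this I would combine two uniform ingredients. (i) As $(G,o)\mapsto\1\{|B_G(o,R)|>M\}$ is a bounded, locally constant (hence continuous) functional on $\cG_\ast$, local weak convergence — implied by $\G_n\overset{\P}{\rightharpoonup}(\G,o)$ — gives $\P(|B_{\G_n}(o_n,R)|>M)\to\P(|B_\G(o,R)|>M)$, and the latter vanishes as $M\to\infty$ because $\G$ is locally finite; thus for every $\varepsilon>0$ there is a finite $M=M(\varepsilon,R)$ with $\limsup_{n\to\infty}\P(|B_{\G_n}(o_n,R)|>M)\leq\varepsilon$. (ii) On any connected graph $H$ with at most $M$ vertices the contact process dies out with a uniform exponential rate: there is $p_0=p_0(M,\lambda)\in(0,1)$ such that from any configuration the empty state is reached within one time unit with probability at least $p_0$ (for instance, no infection mark on any of the at most $\binom{M}{2}$ edges together with a recovery mark at each infected vertex during $[0,1]$), so $\mathbf{P}_H^\lambda(\tau_\emptyset(V(H))>t)\leq(1-p_0)^{\lfloor t\rfloor}$ by the Markov property. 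Since $H_n\subseteq B_{\G_n}(o_n,R)$, these give
\[
\P^\lambda\big(\xi^{o_n}_{t(n)}\neq\emptyset,\ \tau^{(n)}_R(o_n)>t(n)\big)\ \leq\ \P^\lambda\big(|B_{\G_n}(o_n,R)|>M\big)+\big(1-p_0(M,\lambda)\big)^{\lfloor t(n)\rfloor},
\]
and letting $n\to\infty$ (using $t(n)\to\infty$) and then $\varepsilon\downarrow0$ finishes the argument.

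The step I expect to be the main obstacle is (i): one cannot replace it by a naive uniform degree or ball-size bound, since the proposition assumes no form of sparsity and the $R$-balls may well contain high-degree vertices — indeed the statement genuinely fails for sequences such as $\G_n=K_n$, which do not converge locally. It is precisely local convergence, via tightness of $\{B_{\G_n}(o_n,R)\}_{n}$ in $\cG_\ast$, that rules out the infection surviving for a long time on, say, a large star sitting inside the $R$-ball. The remaining pieces — the coupling up to $\tau^{(n)}_R(o_n)$, the uniform exponential extinction tail on graphs of bounded size, and the Markov-inequality reduction — are routine.
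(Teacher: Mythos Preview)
Your proof is correct and follows essentially the same route as the paper's: Markov's inequality reduces the average to $\P^\lambda(A_{o_n})$, confinement bounds this by the survival probability of the contact process on the $R$-ball around $o_n$, and then tightness of the $R$-balls (from local convergence) together with eventual extinction on finite graphs finishes. The only cosmetic difference is that the paper phrases the tightness step via Prokhorov on $\cG_\ast$, restricting to a finite set $\mathcal A$ of rooted-graph types, whereas you use the simpler scalar tightness of $|B_{\G_n}(o_n,R)|$ and a uniform exponential extinction bound over all graphs with at most $M$ vertices; your version is a touch more explicit, the paper's a touch more structural, but the content is the same.
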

\begin{prop}\label{prop:conversurvpath}
Assume that $(\G_n)_{n\in\N}$ is a sequence of {finite connected random graphs} with $\G_n\underset{n\to\infty}{\overset{\P}{\rightharpoonup}} (\G,o)$ where $(\G,o)$ is distributed according to some extremal measure $\mathsf Q$. Then we have, for any $R\in\N$, that \[|\V_n|^{-1}Z_R\underset{n\to\infty}{\overset{\P^\lambda}{\longrightarrow}}\eta_R.\]
\end{prop}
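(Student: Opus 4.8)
The plan is to view $|\V_n|^{-1}Z_{\geq R}$ as an empirical average of a \emph{local} observable of the contact-process network and invoke the law-of-large-numbers characterisation of local convergence in probability. Fix $R\in\N$. The key point is that the event $\{\tau_R^{(n)}(v)<\infty\}$ is \emph{not} determined by the $R$-neighbourhood of $v$ alone — the infection path realising $\tau_R$ may wander far outside $B_{\G_n}(v,R)$ before a vertex at distance $R$ is first hit. So I would first argue that it suffices to control a finite-time truncation. Precisely, set
\[
Z_{\geq R}^{(T)}(n)=\sum_{v\in\V_n}\1\big\{\tau_R^{(n)}(v)\leq T\big\},\qquad \eta_{\geq R}^{(T)}=\E\big[\mathbf P_\G^\lambda(\tau_R(o)\leq T)\big].
\]
For fixed $T$, the event $\{\tau_R^{(n)}(v)\leq T\}$ depends only on the graph structure and the graphical-representation marks in $B_{\G_n}(v,R)$ up to time $T$ — indeed, to reach distance $R$ by time $T$ the infection must traverse a path of length $\geq R$ inside that ball, and whether such a path exists is a measurable function of the restricted network. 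Hence $h_T(\G_n,v):=\mathbf P_{\G_n}^\lambda(\tau_R^{(n)}(v)\leq T\mid\G_n)$ is (after the standard identification of the contact process with a marked random network as in the excerpt) a bounded continuous function on the relevant mark space, and Lemma~\ref{lem:localconvergence}(b) applied to the network gives $|\V_n|^{-1}\E[Z_{\geq R}^{(T)}(n)\mid\G_n]\overset{\P}{\to}\eta_{\geq R}^{(T)}$.

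Next I would upgrade this to convergence of $|\V_n|^{-1}Z_{\geq R}^{(T)}(n)$ itself (not just its conditional expectation) by a second-moment/variance argument: using Lemma~\ref{lem:localconvergence}(c) with two independent uniform roots $o_n,\tilde o_n$, the covariance of the two indicator variables factorises in the limit because two independently rooted balls are asymptotically disjoint and the graphical-representation marks on disjoint balls are independent; this forces $\mathrm{Var}(|\V_n|^{-1}Z_{\geq R}^{(T)}(n))\to 0$, so $|\V_n|^{-1}Z_{\geq R}^{(T)}(n)\overset{\P^\lambda}{\to}\eta_{\geq R}^{(T)}$. (Extremality of $\mathsf Q$ is what makes the limiting product in (c) equal $(\eta_{\geq R}^{(T)})^2$ with no residual covariance; this is the role of the extremality hypothesis.) Then let $T\to\infty$: on the limit side $\eta_{\geq R}^{(T)}\uparrow\eta_{\geq R}$ by monotone convergence, and on the finite side $0\leq |\V_n|^{-1}(Z_{\geq R}(n)-Z_{\geq R}^{(T)}(n))=|\V_n|^{-1}\sum_v\1\{T<\tau_R^{(n)}(v)<\infty\}$, whose conditional expectation converges (for fixed $T$, by the same local-observable argument applied to the continuous bounded function $\mathbf P^\lambda(T<\tau_R(o)<\infty\mid\cdot)$ — note this event also depends only on the $R$-ball since it requires the traversal path to exist eventually) to $\eta_{\geq R}-\eta_{\geq R}^{(T)}\to 0$. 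A standard $\varepsilon/3$ interchange-of-limits argument then yields $|\V_n|^{-1}Z_{\geq R}(n)\overset{\P^\lambda}{\to}\eta_{\geq R}$.

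The main obstacle I expect is the measurability/continuity bookkeeping in the first step: one must be careful that $\{\tau_R^{(n)}(v)\leq T\}$, although it concerns an infection that may leave $B_{\G_n}(v,R)$, is genuinely a function of the network restricted to that ball — the point being that \emph{reaching distance exactly $R$} only requires a path \emph{within} the ball, and what happens outside is irrelevant to this particular event (as opposed to, say, the indicator of survival). Establishing that the relevant map on the truncated marked network is continuous for $\mathsf Q$-a.e.\ (or $\P^\lambda$-a.e.) configuration — so that the Portmanteau-type statement in Lemma~\ref{lem:localconvergence} applies — is the delicate technical heart; the Poisson marks have no atoms on the relevant thresholds almost surely, which should give the needed a.s.\ continuity, but this needs to be spelled out. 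Everything after that is routine first- and second-moment computation exploiting the independence of the graphical representation on disjoint regions.
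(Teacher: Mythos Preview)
Your argument goes through, but the detour via the time truncation $T$ is unnecessary and stems from a misconception in your opening paragraph. You write that ``the infection path realising $\tau_R$ may wander far outside $B_{\G_n}(v,R)$ before a vertex at distance $R$ is first hit'' --- this is impossible. Before time $\tau_R^{(n)}(v)$ every infected vertex lies at graph distance at most $R-1$ from $v$, by the very definition of $\tau_R^{(n)}$; to reach distance $R+1$ or beyond the infection would first have to cross a vertex at distance exactly $R$, stopping the clock. Hence the \emph{entire} event $\{\tau_R^{(n)}(v)<\infty\}$ (not merely its time-$T$ truncation) is a function of the graph structure and graphical-representation marks inside $B_{\G_n}(v,R)$. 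You effectively concede this in your penultimate paragraph, which makes the truncation layer and the $\varepsilon/3$ interchange redundant.

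The paper exploits this locality directly. The map $(G,o)\mapsto\mathbf P_G^\lambda(\tau_R(o)<\infty)$ depends only on $B_G(o,R)$ (the paper conservatively says radius $R+1$) and is therefore bounded and continuous on $\cG_\ast$. One application of Lemma~\ref{lem:localconvergence}(b) gives convergence of the first moment $\E^\lambda[|\V_n|^{-1}Z_{\geq R}]$ to $\eta_{\geq R}$; one application of Lemma~\ref{lem:localconvergence}(c) with two independent uniform roots gives convergence of the second moment to $\eta_{\geq R}^2$; the conditional variance vanishes and convergence in probability follows. Your route reaches the same destination through the same first/second-moment mechanism, but pays the overhead of the truncation and the continuity bookkeeping you flag as ``the main obstacle'' --- all of which evaporate once locality of the untruncated event is recognised.
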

Before we prove Propositions~\ref{prop:noboundedsurvival} and~\ref{prop:conversurvpath}, we show how they imply Theorem~\ref{thm:metastable1}.
\begin{proof}[Proof of Theorem~\ref{thm:metastable1}]
By self-duality, we have that
\[
\P^\lambda\big( \big|\xi_{t(n)}^{\V_n}\big|\in \cdot \big) = \P^\lambda\Big( \sum_{v\in \V_n}\1\big\{\xi_{t(n)}^v\neq \emptyset\big\}\, \in \cdot \Big)
\]
and hence the assertion of the theorem is equivalent to showing
\begin{equation}\label{eq:toshow}
\Big(|\V_n|^{-1}\sum_{v\in \V_n}\1\big\{\xi_{t(n)}^v\neq \emptyset\big\}- \eta\Big)\vee 0 \underset{n\to\infty}{\overset{\P^\lambda}{\longrightarrow}} 0.
\end{equation}
Observe that, on one hand,
\[
\eta=\mathbb{P}^\lambda\Big(\bigcap_{R\in\N}\{\tau_R(o)<\infty\} \Big)=\lim_{R\to\infty}\eta_R,
\]
and hence, for any given $\varepsilon$, we may chose $R_{\varepsilon}$ so large that, for any $R>R_{\varepsilon}$,
\begin{equation}\label{eq:eta_Rbound}
|\V_n|^{-1}\sum_{v\in \V_n}\1\big\{\xi_{t(n)}^v\neq \emptyset\big\}\leq \eta_R+\varepsilon/2
\end{equation}
implies
\[
|\V_n|^{-1}\sum_{v\in \V_n}\1\big\{\xi_{t(n)}^v\neq \emptyset\big\}\leq \eta+\varepsilon.
\]
On the other hand, the bound
\[
	\begin{aligned}
		|\V_n|^{-1}
		&
			\sum_{v\in \V_n}\1\big\{\xi_{t(n)}^v\neq \emptyset\big\}
		\\ &
			= |\V_n|^{-1}\sum_{v\in \V_n}\1\big\{\xi_{t(n)}^v\neq \emptyset, \tau_R^{(n)}(v)\leq t(n)\big\}+|\V_n|^{-1}\sum_{v\in \V_n}\1\big\{\xi_{t(n)}^v\neq \emptyset, \tau_R^{(n)}(v)> t(n)\big\}\\
		& \leq |\V_n|^{-1}\sum_{v\in \V_n}\1\big\{\tau_R^{(n)}(v)< \infty\big\}+|\V_n|^{-1}\sum_{v\in \V_n}\1\big\{\xi_{t(n)}^v\neq \emptyset, \tau_R^{(n)}(v)> t(n)\big\},
	\end{aligned}
\]
together with Proposition~\ref{prop:noboundedsurvival} tells us that
\[
\Big(|\V_n|^{-1}\sum_{v\in \V_n}\1\big\{\xi_{t(n)}^v\neq \emptyset\big\}-|\V_n|^{-1}Z_R\Big)\vee 0 \underset{n\to\infty}{\overset{\P^\lambda}{\longrightarrow}} 0.
\]
for any $R\in\N$. Choosing $R>R_\varepsilon$ and applying Proposition~\ref{prop:conversurvpath} thus yields that \eqref{eq:eta_Rbound} occurs with probability tending to $1$ as $n\to\infty$, which in turn establishes \eqref{eq:toshow} and concludes the proof.
\end{proof}
It remains to prove the two supporting results.
\begin{proof}[Proof of Proposition~\ref{prop:noboundedsurvival}]
For all $\varepsilon>0$, we have
\[\begin{aligned}
    \P^\lambda\Big(|\V_n|^{-1}
    &
        \sum_{v\in \V_n}\1\{\xi^{v}_{t(n)}\neq \emptyset, \tau_R^{(n)}(v)>t(n)\}>\varepsilon\Big)
    \\&
        \le \frac{1}{\varepsilon}\E^\lambda\Big[|\V_n|^{-1}\sum_{v\in \V_n}\1\{\xi^{v}_{t(n)}\neq \emptyset, \tau_R^{(n)}(v)>t(n)\}\Big]
    \\&
        \le \frac{1}{\varepsilon} \E^\lambda \Big[|\V_n|^{-1}\sum_{v\in \V_n}f_{t(n)}\big(B_{\G_n}(v,R)\big)\Big]
    \\&
        =\frac{1}{\varepsilon}\sum_{(H,o_H)\in \tilde\cG_\ast}f_{t(n)}(H,o_H) \P\big(B_{\G_n}(o_n,R)= (H,o_H)\big),
\end{aligned}\]
where
\[
    f_{t(n)}(G,o):=\PP ^\lambda_G\big(\xi^{o}_s\neq \emptyset \text{ for all } s\le t(n)\big), \quad (G,o)\in\cG_\ast.
\]
By the assumption of local convergence in probability, the distributions of $B_{\G_n}(o_n,R)$ converge to the distribution of $B_{\G}(o,R)$. Since the limiting graph is locally finite, $\P(B_{\G}(o,R)\in \; \cdot\;)$ is a probability measure on $\cG_\ast$ and hence, by Prokhorov's theorem, the distributions of $\big(B_{\G_n}(o_n,R)\big)_{n\ge 1}$ are tight. Hence, for all $\delta>0$, there exists a finite set $\mathcal A\subset \tilde\cG_\ast$ such that
\[
    \sup_{n\ge 1}\sum_{(H,o_H)\notin \mathcal A}\P^\lambda\big(B_{\G_n}(o_n,R)= (H,o_H) \big)\le \delta.
\]
We thus conclude that
\[\begin{aligned}
& \limsup_{n\to\infty}\P^\lambda\Big(|\V_n|^{-1}\sum_{v\in \V_n}\1\{\xi^{v}_{t(n)} \neq \emptyset, \tau_R^{(n)}(v)>t(n)\}>\varepsilon\Big)\\
&\le \limsup_{n\to\infty}\sum_{(H,o_H)\in \mathcal A} f_{t(n)}(H,o_H)\P\big(B_{\G_n}(o_n,R)= (H,o_H) \big)+ \frac{\delta}{\varepsilon}= \frac{\delta}{\varepsilon},
\end{aligned}
\]
since $\lim_{n\to\infty}\PP _H^\lambda\big(\xi^{o_H}_s\neq \emptyset \text{ for all } s\le t(n)\big)=0$ for all finite $H$. This gives the result.
\end{proof}

\begin{proof}[Proof of Proposition~\ref{prop:conversurvpath}]
We have that
\[
\lim_{n\to\infty}\E^\lambda\big[|\V_n|^{-1}Z_R-\eta_R \big]= \lim_{n\to\infty}\E\big[\mathbf{P}_{\G_n}^\lambda\big(\tau_R^{(n)}(o_n)<\infty\big)\big|\G_n\big]-\eta_R=0,
\]
by Lemma~\ref{lem:localconvergence}(b), hence the first moments asymptotically agree under $\P^\lambda$. On the other hand,
\[
\E^{\lambda}\big[|\V_n|^{-2}Z_R^2\big|\G_n\big] =\E\big[ \mathbf{P}_{\G_n}^\lambda\big(\tau_R^{(n)}(o_n)<\infty,\tau_R^{(n)}(o'_n)<\infty\big) \big|\G_n\big],
\]
where $(o_n,o'_n)$ is uniformly chosen (with replacement) from $\V_n\times \V_n$. Since the events $\{\tau_R^{(n)}(o_n)<\infty\}$ and $\{\tau_R^{(n)}(o'_n)<\infty\}$ are measurable with respect to the marks \(\Xi_V,\Xi_E\) of the random network and the graph inside a radius of $R+1$ around the respective root, the right-hand side converges to $\eta_R^2$ as $n\to\infty$ by Lemma~\ref{lem:localconvergence}(c). Combining the first and second moment limits yields that the conditional variance of ${Z_R}/{|\V_n|}$, given $\G_n$, vanishes, implying that ${Z_R}/{|\V_n|}$ converges to its expectation $\eta_R$ in probability and in $L1$.
\end{proof}

\subsection{Lower bounds on the metastable density from local limits}
In this section, we prove Proposition~\ref{prop:impliesconvergence}.
\begin{proof}[Proof of Proposition~\ref{prop:impliesconvergence}]
Assume that $\G_n\underset{n\to\infty}{\overset{\P}{\rightharpoonup}} (\G,o)$ and let $(t(n))_{n\in\N}$ denote a sequence of diverging times. We begin by showing that
\[
|\V_n|^{-1}\sum_{v\in \V_n}\1\big\{\xi_{t(n)}^v\neq \emptyset\big\}\underset{n\to\infty}{\overset{\P^\lambda}{\longrightarrow}} \eta_\lambda(\mathsf Q)
\]
implies \[\lim_{R\to\infty}\limsup_{n\to\infty}\P^{\lambda}\big(\xi^{o_n}_{t(n)}=\emptyset, \tau^{(n)}_R(o_n)<t(n)\big)=0.\]
To this end, observe that, for all $\varepsilon>0$,
\begin{equation}\label{eq:convEBound}
    \begin{aligned}
        \E^\lambda\Big[
        &
        	|\V_n|^{-1} \sum_{v\in \V_n}\1\big\{\xi_{t(n)}^v= \emptyset, \tau^{(n)}_R(v)\leq t(n)\big\}\Big]
        \\ &
        	\le \varepsilon+\P^\lambda\Big( \sum_{v\in \V_n}\1\big\{\xi_{t(n)}^v= \emptyset, \tau^{(n)}_R(v)\leq t(n)\big\}>|\V_n|\varepsilon\Big).
    \end{aligned}
\end{equation}
The random variable of the second term can be rewritten as
\[
    \begin{aligned}
        \sum_{v\in\V_n}
        &
        	\1\big\{\xi_{t(n)}^v= \emptyset, \tau^{(n)}_R(v)\leq t(n)\big\}
        \\ &
            \leq \sum_{v\in\V_n}\1\big\{\tau_R^{(n)}(v)<\infty\big\} - \sum_{v\in\V_n} \1\big\{\xi_{t(n)}^v\neq \emptyset, \tau^{(n)}_R(v)\leq t(n)\big\}
        \\ &
            = \sum_{v\in\V_n}\1\big\{\tau_R^{(n)}(v)<\infty\big\} - \sum_{v\in\V_n} \1\big\{\xi_{t(n)}^v\neq \emptyset\big\}+\sum_{v\in\V_n}\1\big\{\xi_{t(n)}^v\neq \emptyset, \tau^{(n)}_R(v)> t(n)\big\}.
    \end{aligned}
\]
After dividing both sides by \(\V_n\), the right-hand side converges in probability to \(\eta_R-\eta(\mathsf Q)\), as \(n\to\infty\) by assumption and Propositions~\ref{prop:noboundedsurvival} and~\ref{prop:conversurvpath}. As \(\eta_R\downarrow \eta\), as \(R\to\infty\), the expectation in~\eqref{eq:convEBound} can be made arbitrarily small by choosing \(n\) and \(R\) large enough, proving the claimed implication.

It remains to prove the other implication, i.e.\ that~\eqref{eq:almostlocal} implies, for all $\varepsilon>0$,
\begin{equation}\label{eq:toshow43}
    \P^\lambda\big(|\xi^{\V_n}_{t(n)}|<|\V_n|(\eta-\varepsilon)\big)\to 0\qquad \text{as }n\to\infty.
\end{equation}
Using self-duality, this is equivalent to
\[\begin{aligned}
    \P^\lambda\Big(\sum_{v\in \V_n}\1\big\{\xi_{t(n)}^v\neq \emptyset\big\}<|\V_n|(\eta-\varepsilon)\Big)\to 0\qquad \text{as }n\to\infty.
\end{aligned}\]
To obtain this statement, note that
\[\begin{aligned}
    &\P^\lambda\Big(\sum_{v\in \V_n}\1\big\{\xi_{t(n)}^v\neq \emptyset\big\}<|\V_n|(\eta-\varepsilon)\Big)\le\P^\lambda\Big(\sum_{v\in \V_n}\1\big\{\xi_{t(n)}^v\neq \emptyset, \tau^{(n)}_R(v)<\infty\big\}<|\V_n|(\eta-\varepsilon)\Big)\\
    &\le\P^\lambda\Big(\sum_{v\in \V_n}\1\big\{\tau^{(n)}_R(v)<\infty\big\}<|\V_n|(\eta-\varepsilon/2)\Big)+\P^\lambda\Big(\sum_{v\in \V_n}\1\big\{\xi_{t(n)}^v= \emptyset, \tau^{(n)}_R(v)<\infty\big\}>|\V_n|\varepsilon/2\Big),
\end{aligned}\]
where the first summand tends to zero by Proposition~\ref{prop:conversurvpath}. For the second summand, we note that \(\xi^v_{t(n)}=\emptyset\) and \(\tau_R^{(n)}(v)<\infty\) together imply \(\tau_R^{(n)}(v)<t(n)\). An application of Markov's inequality then yields
\[
    \begin{aligned}
        \mathbf{P}^{\lambda}_{\G_n}\Big(\sum_{v\in \V_n}\1\big\{\xi_{t(n)}^v= \emptyset, \tau^{(n)}_R(v)<t(n)\big\}>\varepsilon|\V_n|\Big)
        &
            \le \frac{1}{\varepsilon |\V_n|}\mathbf{E}_{\G_n}^\lambda\Big[\#\big\{v\in \V_n\colon \xi_{t(n)}^v= \emptyset, \tau_R^{(n)}(v)\le t(n)\big\}\Big]
        \\&
            \leq \frac1\varepsilon\mathbf{P}^\lambda_{\G_n}\big(\xi_{t(n)}^{o_n}=\emptyset, \tau_R^{(n)}(o_n)<t(n)\big).
    \end{aligned}
\]
Taking expectations on both sides, we hence see that~\eqref{eq:almostlocal} is a sufficient criterion for~\eqref{eq:toshow43}.
\end{proof}

\subsection{Fast extinction by absence of metastability}
In this section, we prove Theorem~\ref{thm:configuration model1}. We begin with an auxiliary result.
\begin{lemma}\label{lem:sparsity}
Let $(\G_n)_{n\in\N}$ be a sequence of {finite connected sparse random graphs} with $\G_n\underset{n\to\infty}{\overset{\P}{\rightharpoonup}} (\G,o)$. Then, for every $\delta>0$, there exists $\varepsilon>0$ such that
\[
\lim_{n\to\infty}\P\Big(\max_{I\subset \V_n\colon |I|<\varepsilon |\V_n|} \sum_{v\in I}\deg_{\G_n}(v)>\delta |\V_n|\Big)=0.
\]
\end{lemma}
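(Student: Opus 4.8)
The plan is to reduce the assertion to a law of large numbers for truncated degree sums. For $M\in\N$ put $W_n^{(M)}:=|\V_n|^{-1}\sum_{v\in\V_n}\deg_{\G_n}(v)\1\{\deg_{\G_n}(v)>M\}$. Splitting a degree according to whether it exceeds $M$ gives the deterministic bound $\sum_{v\in I}\deg_{\G_n}(v)\le M|I|+\sum_{v\in\V_n}\deg_{\G_n}(v)\1\{\deg_{\G_n}(v)>M\}$ for every $I\subseteq\V_n$, hence $\max_{|I|<\varepsilon|\V_n|}\sum_{v\in I}\deg_{\G_n}(v)\le(M\varepsilon+W_n^{(M)})|\V_n|$. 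Consequently, if for a given $\delta>0$ we can produce $M=M(\delta)$ with $\lim_{n\to\infty}\P(W_n^{(M)}>\delta/2)=0$, then $\varepsilon:=\delta/(2(M+1))$ forces $M\varepsilon<\delta/2$ and therefore $\{\max_{|I|<\varepsilon|\V_n|}\sum_{v\in I}\deg_{\G_n}(v)>\delta|\V_n|\}\subseteq\{W_n^{(M)}>\delta/2\}$, and we are done. So the whole task is the concentration statement for $W_n^{(M)}$.

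To prove that, I would use a second, auxiliary truncation, combining uniform integrability with the law of large numbers for bounded functionals from Lemma~\ref{lem:localconvergence}(b). Write $a_n^{(M)}:=\E[\deg_{\G_n}(o_n)\1\{\deg_{\G_n}(o_n)>M\}]$ and $a^{(M)}:=\sup_n a_n^{(M)}$; sparsity says precisely that $a^{(M)}\to0$ as $M\to\infty$, and $\E[W_n^{(M)}]=a_n^{(M)}$ since $o_n$ is uniform. Fix $M$ with $a^{(M)}<\delta/16$. For an auxiliary level $K>M$, decompose $W_n^{(M)}=A_n^{(K)}+W_n^{(K)}$ with $A_n^{(K)}:=|\V_n|^{-1}\sum_{v\in\V_n}\deg_{\G_n}(v)\1\{M<\deg_{\G_n}(v)\le K\}$. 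The functional $(G,o)\mapsto\deg_G(o)\1\{M<\deg_G(o)\le K\}$ is bounded by $K$ and depends only on $B_G(o,1)$, hence is continuous on $\cG_\ast$; by Lemma~\ref{lem:localconvergence}(b) applied to it, $A_n^{(K)}$ converges in probability (and in mean) to the constant $\bar A^{(K)}=\lim_{n\to\infty}\E[A_n^{(K)}]\le\limsup_{n\to\infty}a_n^{(M)}\le a^{(M)}<\delta/16$. On the other hand $\E[W_n^{(K)}]=a_n^{(K)}\le a^{(K)}$, so Markov's inequality gives $\P(W_n^{(K)}>\delta/4)\le 4a^{(K)}/\delta$. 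Combining these, and using that $\bar A^{(K)}<\delta/4$ forces $\P(A_n^{(K)}>\delta/4)\to0$,
\[
\limsup_{n\to\infty}\P\big(W_n^{(M)}>\delta/2\big)\le\limsup_{n\to\infty}\P\big(A_n^{(K)}>\delta/4\big)+4a^{(K)}/\delta=4a^{(K)}/\delta .
\]
Since this holds for every $K>M$ and $a^{(K)}\to0$ as $K\to\infty$, the left-hand side is $0$, which is exactly the concentration statement needed above.

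The crux of the argument — and the reason the hypothesis includes local convergence and not merely sparsity — is the concentration of the bounded piece $A_n^{(K)}$: uniform integrability alone controls only the \emph{mean} $\E[W_n^{(M)}]$, so applying Markov's inequality directly to $W_n^{(M)}$ would merely show that $\P(W_n^{(M)}>\delta/2)$ is \emph{small}, never that it vanishes. The two-scale truncation removes this defect: the bounded part obeys the local-limit law of large numbers of Lemma~\ref{lem:localconvergence}(b), while the heavy tail $W_n^{(K)}$ is driven to $0$ by letting the auxiliary cutoff $K\to\infty$ only \emph{after} $n\to\infty$, with uniform integrability ensuring $a^{(K)}$ can be made arbitrarily small. (The same argument in fact identifies the limit of $W_n^{(M)}$ as the annealed tail mean $\E[\deg_{\G}(o)\1\{\deg_{\G}(o)>M\}]$, but only the one-sided estimate is required here.)
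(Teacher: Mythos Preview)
Your proof is correct and complete. The approach differs from the paper's: the paper orders the degrees $D_1^{(n)}\le\dots\le D_N^{(n)}$ and argues that the normalized sum of the top $\lfloor\varepsilon N\rfloor$ degrees converges in probability to a deterministic limit $\sigma_\varepsilon$ which vanishes as $\varepsilon\to 0$, using that the full degree average converges to $\E[\deg_\G(o)]$ together with convergence of the complementary partial sum. You instead split degrees by \emph{value} rather than by \emph{rank}: the deterministic bound $\sum_{v\in I}\deg(v)\le M|I|+|\V_n|\,W_n^{(M)}$ reduces everything to concentration of the truncated tail sum $W_n^{(M)}$, which you then handle by a second cutoff at $K>M$, applying Lemma~\ref{lem:localconvergence}(b) to the bounded piece $A_n^{(K)}$ and uniform integrability plus Markov to $W_n^{(K)}$. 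Your route is more explicit and aligns directly with the definition of uniform integrability, so each step is transparently justified; the paper's rank-based argument is conceptually natural (the maximum over $I$ is attained by the top-ranked vertices) but the convergence of the ordered partial sums is asserted rather than spelled out. Both proofs ultimately rest on the same two ingredients, local convergence for bounded functionals and uniform integrability for the tail.
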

\begin{proof}
Fix $\delta>0$ and denote $|\V_n|=N$. Let $D_1^{(n)}\geq D_2^{(n)}\geq \dots\geq  D_N^{(n)}$ denote the vertex degrees in $\G_n$ ordered by magnitude. By sparsity and the convergence assumption, it follows that, for any $\varepsilon\in[0,1]$,
\[
    \infty>\E[ \deg_\G(o)] -\lim_{n\to\infty}N^{-1}\sum_{i=\lfloor\varepsilon N\rfloor+1}^{N}D_i^{(n)}= \lim_{n\to\infty}N^{-1}\sum_{i=1}^{\lfloor \varepsilon N\rfloor}D_i^{(n)}=:\sigma_\varepsilon,
\]
where the convergence is to be understood in probability. In particular, the deterministic term $\sigma_\varepsilon$ on the right-hand side vanishes as $\varepsilon\to 0$. Writing $\Sigma_\varepsilon(n):=N^{-1}\sum_{i=1}^{\lfloor \varepsilon N\rfloor}D_i^{(n)}$, we obtain for $\varepsilon\in(0,1)$ fixed
\[
\lim_{n\to\infty}\P(\Sigma_\varepsilon(n) > 2 \sigma_\varepsilon)=0.
\]
The desired result now follows upon choosing $\varepsilon=\varepsilon(\delta)$ such that $\sigma_{\varepsilon}<\delta/2$.
\end{proof}

{We now explain how the sparsity lemma turns absence of metastability into fast extinction on the exponential scale. Take \(\lambda<\lambda_\rho\). Then the process started from full occupancy has vanishing density at every exponential time \(\exp(c|\V_n|)\). Consequently, during an interval of length \(T=\exp(c|\V_n|)\), the process cannot spend a positive fraction of its time at positive density; if it has not yet died out, it must therefore make many separated visits to low-density states. Whenever the infected set has density at most \(\varepsilon\), Lemma~\ref{lem:sparsity} ensures that its total degree is at most \(\delta|\V_n|\), with high probability. At such a time there is an extinction attempt: all currently infected vertices recover during the next unit interval and no infection arrow leaves the infected set. The probability of this attempt is exponentially small in \(|\V_n|\), but by choosing \(\varepsilon\) and \(\delta\) small relative to \(c\), the number of low-density visits on the time scale \(T\) compensates for this cost. Thus survival up to \(T\) has vanishing probability, which is the desired inclusion \(\lambda<\lambda_+\).}

\begin{proof}[Proof of Theorem~\ref{thm:configuration model1}]
As \(\lambda_\rho\geq \lambda_1\) by Theorem~\ref{thm:metastable1}, it suffices to show \(\lambda_\rho=\lambda_+\). Clearly, \(\lambda_\rho\geq \lambda_+\) by definition, and it hence remains to show \(\lambda_\rho\leq\lambda_+\). To this end, pick $\lambda < \lambda_\rho$ and show that $\xi^{\V_n}$ does not survive on the exponential scale. Let $c>0$ be arbitrary, denote by $T=T_{n,c}=\textup{e}^{c|\V_n|}$ the relevant time scale and let
\[
r(\varepsilon,T)=T^{-1}\int_0^{T}\1\big\{\big|\xi_t^{\V_n}\big|\leq \varepsilon |\V_n|\big\}\; \d t
\]
represent the proportion of time that the infected set spends in low density states. Let
\[
	\tau_1=\inf\big\{t>0\colon \big|\xi_t^{\V_n}\big|\le \varepsilon |\V_n|\big\}
\]
and denote
\[
	\tau_k=\inf\big\{t>\tau_{k-1}+1\colon  \big|\xi^{\V_n}_t\big|\le \varepsilon |\V_n|\big\}, \quad k\geq 2.
\]
Define further $K=\max\{k\colon \tau_k\leq T-1\}$. Fix an arbitrary \(\delta>0\) and let \(\varepsilon\) be as in Lemma~\ref{lem:sparsity}. Then, at each time \(\tau_k\), \(k\leq K\), the total size of the infected set is at most \(\varepsilon |\V_n|\) and, consequently, has total degree no larger than \(\delta|\V_n|\) with probability \(1-o(1)\). Hence, conditionally on the evolution of the process up to the stopping time $\tau_k$, the probability of immediate extinction just after $\tau_k$, meaning that a recovery occurs at every vertex in the time interval $[\tau_k,\tau_k+1)$ but no infection, is at least
\[
(1-\textup{e}^{-1})^{|\xi_{\tau_k}|}\textup{e}^{-2\lambda \sum_{v\in \xi_{\tau_k}}\deg_{\G_n}(v)}\geq (1-\textup{e}^{-1})^{\varepsilon |\V_n|}\textup{e}^{-2\lambda \delta |\V_n|},
\]
where the inequality holds with probability exceeding $1-o(1)$ uniformly for all $k\leq K$. More precisely, if $E_n$ denotes the exceptional event in Lemma~\ref{lem:sparsity}, then we obtain from the strong Markov property and a coupling of the contact process to a geometric experiment,
\begin{equation*}
    \begin{aligned}
        \P^\lambda
        &
        	\big(\{\tau^{(n)}_\emptyset(\V_n)>T\} \, \big| \, |\V_n|\big)
        \\ &
        	\leq \P\big(E_n \,\big| \, |\V_n|\big)+\E^\lambda\big[\1\{E_n^{\mathsf c}\} \1\{\tau^{(n)}_\emptyset(\V_n)>T\}\1\{K>T/3\} \, \big| \, |\V_n| \big]+\P^\lambda\big(K\leq T/3 \, \big| \,|\V_n|\big)
        \\&
            \leq \P\big(E_n \, \big| \, |\V_n|\big)+\P^\lambda\big(K\leq T/3 \, \big| \,|\V_n|\big)+3T^{-1}\textup{e}^{-\log(1-\textup{e}^{-1}) \varepsilon |\V_n|+2\lambda\delta|\V_n|}.
    \end{aligned}
\end{equation*}
Here, the bound on the last term follows from the fact that $\P(G>t)\leq (pt)^{-1}$ for a $\operatorname{Geometric}(p)$ random variable $G$. Taking expectations yields
\begin{equation}\label{eq:betterbound}
\P^\lambda (\tau^{(n)}_\emptyset(\V_n)>T)\leq \P(E_n)+\P^\lambda(K\leq T/3)+\E\big[3T^{-1}\textup{e}^{-\log(1-\textup{e}^{-1}) \varepsilon |\V_n|+2\lambda\delta|\V_n|}\big].
\end{equation}
Decreasing the values of \(\varepsilon\) and \(\delta\) if needed, the last term of the right-hand side vanishes since $T=\textup{e}^{c|\V_n|}$ with fixed $c$, as \(n\to\infty\), while the first term vanishes by Lemma~\ref{lem:sparsity}. Hence it remains to show that $ \gamma_n=\P^\lambda(K\leq T/3)$ converges to $0$ as $n\to\infty$.

Note that, on the event $\{K\leq T/3 \} $, the term
$1-r(\varepsilon,T)$ is bounded from below by $2/3$. Let $W$ be uniform on $[0,T]$ under $\P^\lambda$ and independent of the graph sequence and contact process. Denote by
\[
    \rho(t)=|\V_n|^{-1}|\xi_t^{\V_n}|, t\geq 0,
\]
the density process and by $\mathcal{I}_n$ the $\sigma$-field generated by $\G_n$ and the corresponding edge marks in the network construction of $\xi$. We have
\[
	\P^\lambda(\rho(W)>\varepsilon)\geq \E^\lambda\big[\1\{K\leq  T/3\}\P(\rho(W)>\varepsilon\mid \mathcal{I}_n)\big]\geq \tfrac{2}{3}\P^\lambda(K\leq  T/3)=\tfrac{2}{3}\gamma_n.
\]
On the other hand,
\[
\P^\lambda(\rho(W)>\varepsilon)\leq \tfrac{1}{\varepsilon}\E^\lambda[\rho(W)],
\]
and by self-duality of $\xi$ we have
\[
    \E^\lambda[\rho(W)]\leq \P^\lambda(\tau^{(n)}_\emptyset(o_n)>W)\leq \P^\lambda(W\leq \sqrt T)+ \P^\lambda(\tau^{(n)}_\emptyset(o_n)>\sqrt T).
\]
The first term vanishes by choice of $W$. Note that duality implies that the second term equals
\[
    \E\big[\bfP_{\G_n}^\lambda \big(\xi_{\sqrt{T}}^{\V_n}\cap\{o_n\}\neq \emptyset\big)\big] = \E^\lambda \big[\rho(\sqrt{T})\big],
\]
as \(o_n\) is chosen uniformly. Since $\lambda<\lambda_\rho$, we have \(\rho(\sqrt{T})\to 0\) in probability, and the expectation on the right-hand side thus vanishes by dominated convergence, as \(\rho\leq 1\). Hence, $\lim_{n\to\infty}\gamma_n = 0$ and the result follows from \eqref{eq:betterbound}.
\end{proof}

\section*{Acknowledgments}
The authors used AI-assisted tools for editorial support only, such as automated retrieval of bibliography and template conversion for the LaTeX manuscript. No AI tools were used to generate mathematical results or proofs. The authors reviewed and take full responsibility for the final content.

\section*{Funding}
CM's research was funded by Deutsche Forschungsgemeinschaft (DFG, German Research Foundation) - SPP 2265 443916008. BJ and LL acknowledge the financial support of the Leibniz Association within the Leibniz Junior Research Group on \emph{Probabilistic Methods for Dynamic Communication Networks} as part of the Leibniz Competition as well as by Deutsche Forschungsgemeinschaft (DFG, German Research Foundation) under Germany's Excellence Strategy -- The Berlin Mathematics Research Center MATH+ (EXC-2046/1, EXC-2046/2, project ID: 390685689) through the project {\em EF45-3} on {\em Data Transmission in Dynamical Random Networks}.

\bibliographystyle{plainnat}
\bibliography{bib}

\end{document}